\newtheorem{theorem}{Theorem}[section]
\newtheorem{proposition}[theorem]{Proposition}
\newtheorem{lemma}[theorem]{Lemma}
\newtheorem{corollary}[theorem]{Corollary}
\theoremstyle{remark} 
\newtheorem{definition}[theorem]{Definition}
\newtheorem{example}[theorem]{Example}
\newtheorem{remark}[theorem]{Remark} 
\newcommand{\kerr}{\mbox{Ker} }
\newcommand{\s}{\mathfrak{s}}
\renewcommand{\t}{\mathfrak{t}}
\newcommand{\gq}{g_{\mathbb Q}}
\newcommand{\gz}{g_{\mathbb Z}}
\begin{document}

\title{Heegaard Floer genus bounds for Dehn surgeries on knots}
\begin{abstract}
We provide a new obstruction for a rational homology $3$-sphere to arise by Dehn surgery on a given knot in the $3$-sphere. The obstruction takes the form of an inequality involving the genus of the knot, the surgery coefficient, and a count of {\em $L$-structures} on the $3$-manifold, that is spin$^c$-structures with the simplest possible associated Heegaard Floer group. Applications include an obstruction for two framed knots to yield the same $3$-manifold, an obstruction that is particularly effective when working with families of framed knots. We introduce the {\em rational and integral Dehn surgery genera} for a rational homology $3$-sphere, and use our inequality to provide bounds, and in some cases exact values, for these genera. We also demonstrate that the difference between the integral and rational Dehn surgery genera can be arbitrarily large. 
\end{abstract}
\author{Stanislav Jabuka}
\email{jabuka@unr.edu}
\address{Department of Mathematics and Statistics, University of Nevada, Reno NV 89557.}
\thanks{The author was partially supported by grant \#246123 from the Simons Foundation, and from a research grant from the University of Nevada, Reno.}
\maketitle
\section{Introduction}
\subsection{Preface} 
It is well known \cite{Lickorish, Wallace} that every oriented, closed $3$-manifold can be constructed via Dehn surgery on a framed link $L$ in $S^3$. The framed link $L$ in this construction is highly non-unique, but any two framed links yielding the same $3$-manifold are related by a finite sequence of blow-ups, blow-downs, handle slides and isotopies \cite{Kirby} (two such framed links shall be called {\em surgery equivalent}). While in theory this curbs the non-uniqueness, in practice it is often not easy to tell if two framed links are related in this manner. Indeed, even in the simpler case of framed knots, it remains a challenge. The first example of an integral homology sphere that can be obtained by surgeries on two different knots was found by Lickorish \cite{Lickorish2}, and many examples have followed since then \cite{Brakes, Kawauchi, Livingston, Soma, Teragaito}. 

To help restrain this many-to-one phenomenon, we derive an obstruction for a $3$-manifold $Y$ to be the result of a $p/q$-framed surgery on a knot $K\subset S^3$. The obstruction takes the form of an inequality (Theorem \ref{main}) involving $p,q$, the genus of the knot $K$ and data derived from the Heegaard Floer homology groups of $Y$. 

For a given framed knot, this inequality bounds from below the genus of any surgery equivalent framed knot.  

Among framed links in $S^3$, those with integer framings play a special role. Indeed, any such link $L$ does not only yield $3$-manifold $Y$ via Dehn surgery, but also describes a smooth, oriented $4$-manifold $X$ with $\partial X = Y$, obtained by attaching $4$-dimensional $2$-handles to the $4$-ball $D^4$, attached to the link $L\subset \partial D^4$. For this reason, we shall heed special attention to integral surgeries when stating our results. 
\subsection{Definitions}
If $r=p/q$ is a rational number in lowest terms, we shall write $S^3_r(K)$ or $S^3_{p/q}(K)$ to denote the $3$-manifold resulting from $r$-framed Dehn surgery on the knot $K\subset S^3$. The Seifert genus of a knot $K$ shall be denoted $g(K)$. 

\begin{definition} Let $Y$ be a rational homology $3$-sphere. We define its 
{\em rational and integral Dehn surgery genera $g_\mathbb Q(Y)$ and $g_\mathbb Z(Y)$} as:
\begin{align} \nonumber
\gq (Y)& = \left\{
\begin{array}{cl}
 \min \left\{ g(K) \, \big| \,\, \,  \text{$Y=S^3_r(K)$, $r\in \mathbb Q$.} \right\}& ; \quad \text{If $Y=S^3_{r}(K)$ for some $K$.} \cr 
 \infty &;\quad \text{Otherwise.}
 \end{array}
 \right. \cr &\cr
\gz (Y) & = \left\{
\begin{array}{cl}
 \min \left\{ g(K) \, \big| \,\, \,  \text{$Y=S^3_p(K)$, $p\in \mathbb Z$.} \right\}& ; \quad \text{If $Y=S^3_{p}(K)$ for some $K$.} \cr 
 \infty &;\quad \text{Otherwise.}
 \end{array}
 \right.
\end{align}
\end{definition}
\noindent Note that $\gq(Y)=0$ if and only if $Y$ is a lens space. 

For a closed and oriented $3$-manifold $Y$, let $Spin^c(Y)$ denote its affine space of spin$^c$-structures and let $\widehat{HF}(Y,\s)$ be its associated {\em hat} version of the Heegaard Floer homology group (these are defined in Section \ref{HeegaardFloerGroupsDoneRight}). 
\begin{definition}
Let $Y$ be a rational homology 3-sphere. A spin$^c$-structure $\s\in Spin^c(Y)$ is called an {\em $L$-structure} if  $\widehat{HF}(Y,\s) \cong \mathbb{Z}$. We shall write $\ell(Y)$ or simply $\ell$ to denote the number of $L$-structures on $Y$.  
\end{definition}

Our use of nomenclature follows that of \cite{OzsvathSzabo12} where a rational homology sphere, all of whose spin$^c$-structures are $L$-structures, is called an {\em $L$-space}. 
\subsection{Results}
With these definitions in place, we turn to our surgery obstruction. 
\begin{theorem} \label{main}
Let $Y$ be a rational homology sphere with $|H_1(Y;\mathbb{Z})|=p$. If $Y$ is obtained by $p/q$-surgery 
on a knot $K\subset S^3$ with $g(K)\ge 1$, then  
\begin{equation} \label{MainInequality}
2g(K)-1 \ge \frac{p-\ell}{|q|} .
\end{equation}
Here $\ell$ is the number of $L$-structures on $Y$.
\end{theorem}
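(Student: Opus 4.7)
My plan rests on the Ozsv\'ath--Szab\'o rational surgery formula for $\widehat{HF}$, which I would use to bound from above the count of non-$L$ spin$^c$-structures on $Y=S^3_{p/q}(K)$. Recall that this formula decomposes $\widehat{HF}(Y,\s_i)$, for spin$^c$-structures $\s_i$ indexed by $i\in\mathbb{Z}/p\mathbb{Z}$, as the homology of a mapping cone of chain complexes built from $\widehat{CFK}(K)$. The essential feature is that this cone involves $|q|$ pieces $\widehat{A}_{j_s(i)}$, one for each $s\in\{0,1,\ldots,|q|-1\}$, at integer ``levels'' of the form $j_s(i)=\lfloor (i+sp)/q\rfloor$, glued to a $\mathbb{Z}$-summand via the surgery differential.

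The crucial input from knot Floer theory is that $\widehat{HFK}(K,j)=0$ for $|j|>g(K)$, and so $\widehat{A}_j$ is quasi-isomorphic to $\mathbb{Z}$ whenever $|j|\ge g(K)$. I would use this to argue that when every level $j_s(i)$ satisfies $|j_s(i)|\ge g(K)$, the mapping cone collapses to a chain complex built entirely from copies of $\mathbb{Z}$ whose homology is again $\mathbb{Z}$, forcing $\s_i$ to be an $L$-structure. Contrapositively, $\s_i$ can fail to be an $L$-structure only if at least one of its levels lies in the ``bad'' window $\{-(g(K)-1),\ldots,g(K)-1\}$, a set of exactly $2g(K)-1$ integers.

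The argument then concludes with a counting step. Since $\gcd(p,|q|)=1$, the map $(i,s)\mapsto i+sp$ on $\{0,\ldots,p-1\}\times\{0,\ldots,|q|-1\}$ is a bijection onto $\{0,1,\ldots,p|q|-1\}$, so the level $j_s(i)=\lfloor (i+sp)/q\rfloor$ attains each value in a range of $p$ consecutive integers exactly $|q|$ times. After shifting to the symmetric indexing natural to the rational surgery formula, the number of pairs $(i,s)$ whose level lies in the bad window is therefore at most $|q|(2g(K)-1)$. Because each non-$L$ spin$^c$-structure contributes at least one such pair, we obtain $p-\ell\le |q|(2g(K)-1)$, which rearranges to \eqref{MainInequality}. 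In the degenerate case $2g(K)-1>p$ the bound is immediate from $p-\ell\le p$.

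The hardest step I expect is the second one: verifying that the mapping cone built from ``trivial'' pieces $\widehat{A}_j\simeq\mathbb{Z}$ indeed has homology $\mathbb{Z}$. This requires understanding the surgery differential well enough to rule out hidden torsion or spurious free summands; morally, it corresponds to the fact that restricting the rational surgery formula to ``outer'' spin$^c$-structures models surgery on an unknotted curve, but matching this rigorously against the indexing of spin$^c$-structures on $S^3_{p/q}(K)$ and invoking the conjugation symmetry $\widehat{HFK}(K,j)\cong\widehat{HFK}(K,-j)$ to legitimize the symmetric bad window demands care.
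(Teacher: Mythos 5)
Your proposal follows essentially the same route as the paper: both use the Ozsv\'ath--Szab\'o mapping cone, observe that a spin$^c$-structure whose levels $\lfloor (i+ps)/q\rfloor$ all avoid the window $(-g,g)$ must be an $L$-structure, and count such structures to get $p-\ell\le(2g-1)|q|$. The one step you leave open (that the cone built from trivial $\hat A$'s has homology $\mathbb Z$) is carried out in the paper exactly as you anticipate, using that $\hat v_s$ and $\hat h_s$ induce isomorphisms on homology for $s\ge g$ and $s\le -g$ respectively, together with the monotonicity of $s\mapsto\lfloor (i+ps)/q\rfloor$, which forces a single transition point in the cone.
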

Using different approaches, other genus bounds stemming from Heegaard Floer homology for knots with prescribed surgeries have been obtained by Ozs\'ath and Szab\'o \cite{OzsvathSzabo12} (providing four-ball genus bounds for knots with lens space surgeries), Rasmussen \cite{Rasmussen2} (showing that if a surgery of slope $p$ on a genus $g$ knot yields a lens space, then $p\le 4g+3$)  and Greene \cite{Greene1} (demonstrating the inequality $2g-1\le p - \sqrt{3p+1}$ for a knot $K$ of genus $g$ on which integral $p>0$ surgery yields a lens space that bounds a sharp $4$-manifold with torsion-free first homology). 
\begin{corollary} \label{main2}
Let $Y$ be a rational homology sphere different from a lens space, and let $\ell$ be the number of $L$-structures on $Y$. Then 
\begin{equation}\label{MainInequalityCorollary}
 2\gz (Y)-1 \ge |H_1(Y;\mathbb{Z})| - \ell.
 \end{equation}
\end{corollary}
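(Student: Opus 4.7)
The plan is to apply Theorem \ref{main} with an optimal choice of knot realizing the integral Dehn surgery genus. First I would dispose of the trivial case: if $\gz(Y) = \infty$, meaning no integer surgery on any knot in $S^3$ yields $Y$, then the inequality (\ref{MainInequalityCorollary}) holds vacuously. Otherwise, since $g(K)$ takes values in the non-negative integers, the minimum in the definition of $\gz(Y)$ is attained by some knot $K \subset S^3$ with $Y = S^3_r(K)$ for an integer $r$, and $g(K) = \gz(Y)$.

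Next I would verify that the hypothesis $g(K) \ge 1$ of Theorem \ref{main} is met. The only knot with $g(K) = 0$ is the unknot, and integer surgery on the unknot produces $S^3$ or a lens space $L(r,1)$. Since $Y$ is assumed not to be a lens space, the minimizing $K$ must satisfy $g(K) \ge 1$.

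Now I would cast the integer surgery coefficient $r$ into the $p/q$ form required by Theorem \ref{main}. Since $|H_1(Y;\mathbb{Z})| = |r|$, setting $p = |H_1(Y;\mathbb{Z})|$ forces $q = \pm 1$ (with sign chosen so that $p/q = r$), and in particular $|q| = 1$. Substituting into the inequality (\ref{MainInequality}) yields
\begin{equation*}
2\gz(Y) - 1 = 2g(K) - 1 \ge \frac{p - \ell}{|q|} = |H_1(Y;\mathbb{Z})| - \ell,
\end{equation*}
which is exactly (\ref{MainInequalityCorollary}).

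There is no real obstacle here: the corollary is a direct specialization of Theorem \ref{main} to integer surgeries, and the minor bookkeeping amounts to observing that non-lens-space hypothesis rules out the unknot and that integer framings correspond to $|q| = 1$. The entire content is carried by Theorem \ref{main} itself.
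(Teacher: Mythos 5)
Your proof is correct and is exactly the intended derivation: the paper offers no separate argument for the corollary, treating it as the immediate specialization of Theorem \ref{main} to a genus-minimizing integrally framed knot, with the non-lens-space hypothesis ruling out the unknot (so $g(K)\ge 1$) and the integral framing giving $|q|=1$. Your handling of the vacuous case $\gz(Y)=\infty$ and the identification $p=|H_1(Y;\mathbb{Z})|=|r|$ are both sound.
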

%
%
%
\begin{remark}
Inequality \eqref{MainInequality} from Theorem \ref{main} unfortunately becomes vacuous for $L$-spaces and integral homology spheres. In both cases the inequality reduces to $g(K)\ge 1$ which is a hypothesis of the theorem.
\end{remark}
\subsection{Examples} We provide families of examples to illustrate two points: 
\begin{itemize}
\item[(a)] Inequality \eqref{MainInequality} from Theorem \ref{main} is sharp for infinitely many surgeries (Proposition \ref{PropositionAboutTau} and Example \ref{ExampleSharpness}).
\item[(b)]  Theorem \ref{main} can be used to provide infinitely many examples of $3$-manifolds $Y$ for which $\gz (Y)>\gq(Y)$. Indeed, the difference $\gq(Y)-\gz(Y)$ can be made arbitrarily large, while being finite (Example \ref{ExampleDiscrepancy}).  
\end{itemize}
For a knot $K$ in $S^3$, let $\tau(K)$ denote its Ozsv\'ath-Szab\'o tau invariant \cite{OzsvathSzabo11} (see Section \ref{SectionTheTauInvariant} for a detailed definition). 
\begin{proposition} \label{PropositionAboutTau}
Let $K\subset S^3$ be a knot with $|\tau(K)| = g(K)>0$ and let $p,q$ be a pair of positive, relatively prime integers with $p-(2g(K)-1)q>0$. Then 
$$\ell\left(S^3_{\varepsilon \frac{p}{q}}(K) \right) =p-(2g(K)-1)q,$$
where $\varepsilon = -Sign (\tau(K))$. 
\end{proposition}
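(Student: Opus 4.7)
The plan is to invoke the Ozsv\'ath--Szab\'o mapping cone formula for rational surgeries and carry out a combinatorial count. By replacing $K$ with its mirror (which negates $\tau(K)$ and the surgery slope while preserving the number of $L$-structures, as this count depends only on the diffeomorphism type of the surgered manifold), I may assume $\tau(K)=-g(K)$ and $\varepsilon=+1$, so the task reduces to computing $\ell(S^3_{p/q}(K))$ for positive coprime integers $p,q$ with $p>(2g(K)-1)q$.

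Next, I apply the Ozsv\'ath--Szab\'o mapping cone formula: for each $i\in\mathbb{Z}/p\mathbb{Z}$ labeling a spin$^c$-structure $\mathfrak{s}_i$ on $Y=S^3_{p/q}(K)$, the group $\widehat{HF}(Y,\mathfrak{s}_i)$ is identified with the homology of a mapping cone $\hat{\mathbb{X}}_{p/q,i}$ built from complexes $\hat A_s$ and $\hat B_s$ extracted from $\widehat{CFK}(K)$, linked by the canonical maps $\hat v_s$ and $\hat h_s$, with summands indexed by $a(i,k)=\lfloor(i+kp)/q\rfloor$ for $k\in\mathbb{Z}$. The hypothesis $\tau(K)=-g(K)$ controls the boundary behavior: for $|s|\ge g(K)$ the complex $\hat A_s$ has homology $\mathbb{Z}$ and exactly one of $\hat v_s$, $\hat h_s$ is an isomorphism on homology, so boundary summands cancel in pairs within the cone. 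Consequently, $\mathfrak{s}_i$ is an $L$-structure precisely when no summand $\hat A_s$ with $|s|<g(K)$ appears in $\hat{\mathbb{X}}_{p/q,i}$; the presence of an interior summand contributes a surviving cycle, via the $\tau$-detecting generator of $\widehat{HFK}(K,-g(K))$, that forces the homology rank above $1$.

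The final step is a lattice count. The inequality $p>(2g-1)q$ forces $\lfloor p/q\rfloor\ge 2g-1$, so consecutive gaps $a(i,k+1)-a(i,k)$ are at least $2g-1$; hence each mapping cone $\hat{\mathbb{X}}_{p/q,i}$ contains at most one summand with interior grading $s\in\{-(g-1),\ldots,g-1\}$. For each such interior $s$, the equation $\lfloor(i+kp)/q\rfloor=s$ in $(i,k)\in\{0,\ldots,p-1\}\times\mathbb{Z}$ has exactly $q$ solutions, as the length-$q$ interval $[sq,(s+1)q)$ together with its translates by multiples of $p$ meets $[0,p)$ in precisely $q$ integers. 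Summing over the $2g-1$ interior gradings produces exactly $(2g-1)q$ non-$L$-structures, so $\ell=p-(2g-1)q$ as claimed.

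The main obstacle will be the assertion, at the end of the second paragraph, that an interior summand $\hat A_s$ in the mapping cone strictly increases the rank of $H_*(\hat{\mathbb{X}}_{p/q,i})$ beyond $1$. This requires a careful chain-level analysis of the maps $\hat v_s,\hat h_s$ near $|s|=g(K)$ and uses $\tau(K)=-g(K)$ to locate a cycle that is not in the image of the cone's differential. Once established, the remainder of the argument is essentially combinatorial.
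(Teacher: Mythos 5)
Your framework is the same as the paper's: reduce to one sign of $\tau$ by mirroring, feed the rational surgery mapping cone into the count, observe that the spin$^c$-structures whose cone contains no ``interior'' summand $\hat A_s$ with $|s|<g$ are $L$-structures (this is Lemmas \ref{LemmaAux1} and \ref{LemmaAux2}), and count the complementary classes; your lattice count of $(2g-1)q$ classes $[i]$ admitting an interior index agrees with the complement of the set $\{gq,\dots,p+q-gq-1\}$ in Lemma \ref{LemmaAux1}. However, you have explicitly deferred the one step that carries the actual content of the proposition: that the presence of an interior summand forces $\widehat{HF}(Y,[i])$ to have rank at least $2$. This is not a routine verification, and it is exactly where the hypotheses $|\tau(K)|=g(K)$ and $\varepsilon=-\mathrm{Sign}(\tau(K))$ must enter. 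Indeed, the equivalence you assert (``$L$-structure precisely when no interior summand appears'') is false without them: for an $L$-space knot (which has $\tau=g$) and \emph{positive} large surgery, every spin$^c$-structure is an $L$-structure even though interior summands occur in abundance. So as written the proposal identifies the right combinatorics but does not prove the statement.

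The missing step does not require the chain-level analysis or the hunt for a cycle supported on $\widehat{HFK}(K,-g)$ that you anticipate; it closes at the level of homology using only the definition of $\tau$. In the paper's normalization ($\tau(K)=g$, surgery coefficient $-p/q$), factor $\hat v_s=\iota_s\circ\pi_s$ through $\mathcal F_K^{-1}(\langle-\infty,s])\hookrightarrow\widehat{CF}(S^3)$; the definition of $\tau(K)=g$ gives $(\iota_s)_*=0$, hence $(\hat v_s)_*=0$, for every $s<g$, and the conjugation symmetry gives $(\hat h_s)_*=0$ for every $s>-g$. Consequently \emph{both} maps out of the unique interior summand $H_*(\hat A_{s_i})\ne 0$ vanish on homology: that summand is a direct summand of $\ker(\hat v+\hat h)_*$, while the copy of $H_*(\hat B)\cong\mathbb Z$ it would otherwise have hit survives in $\mathrm{coker}(\hat v+\hat h)_*$, and the long exact sequence of Theorem \ref{MappingConeTheorem} then injects $\mathbb Z^2$ into $H_*(\hat{\mathbb X}_{[i],r})$. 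Note also that your chosen normalization ($\tau=-g$ with positive surgery) makes this vanishing less immediate, since $\tau(K)=-g$ only kills $(\hat v_s)_*$ for $s<-g$ and $(\hat h_s)_*$ for $s>g$, which says nothing about interior $s$; you would still need to pass through the mirror to get the statement you need. Supplying this argument (or its mirror) is required to complete the proof.
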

Any knot $K$ as in Proposition \ref{PropositionAboutTau} renders inequality \eqref{MainInequality} sharp. Explicit examples of such knots are provided by  $L$-knots (knots which yield an $L$-space by some positive, integral surgery \cite{OzsvathSzabo12}, for instance torus knots $T_{(a,b)}$ with $ab>0$) and their mirrors, and alternating knots $K$ with signature $\sigma (K) = \pm 2g(K)$.  
\begin{example} \label{ExampleSharpness}
Let $K$ be a knot meeting the hypothesis of Proposition \ref{PropositionAboutTau} and set $\varepsilon = -Sign (\tau(K))$. Then, for any positive integer $p> 2g(K)-1$, one obtains
$$\gz \left( S^3_{\varepsilon p}(K)\right)=g(K).$$
For instance, taking a positive integer $g$ and letting $K$ be the torus knot $T_{(2,2g+1)}$, one obtains $\gz \left( S^3_{-p} (T_{(2,2g+1)})\right) = g$ (still with $p>2g-1$). 
\end{example}
Computations justifying our claims in the next example are deferred to Section \ref{ExamplesSection}. 
\begin{example} \label{ExampleDiscrepancy}
We exhibit an infinite family of rational homology $3$-spheres $Y_n$ for which $\gz(Y_n)-\gq(Y_n) \ge \frac{n-1}{2}$. Namely, for $n\in \mathbb N$ let $Y_n$ be the result of $-\frac{4n+1}{n}$-framed surgery on the Figure Eight knot. Then $\ell(Y_n) = 3n+1$ so that $Y_n$ is not an $L$-space for any choice of $n$. Since the genus of the Figure Eight knot is $1$, it follows that $\gq(Y_n) = 1$. Inequality \eqref{MainInequalityCorollary} shows that $\gz(Y_n) \ge \frac{n+1}{2}$ leading to $\gz(Y_n) - \gq(Y_n) \ge \frac{n-1}{2}$. In Section \ref{ExamplesSection} we show that $Y_n$ also arises as an integral surgery on a knot showing $\gz(Y_n)-\gq(Y_n)$ to be finite. 
\end{example}
\subsection{Applications} 
As already alluded to in the introduction, Theorem \ref{main} can obstruct surgery equivalence among framed knots.  We remark that we are only using the ranks of the Heegaard Floer groups for this obstruction. In another direction, the Heegaard Floer correction terms can also be used to furnish surgery obstructions, see for instance \cite{Doig}.

For a pair of framed knots $(K_1,\frac{p}{q_1})$ and $(K_2,\frac{p}{q_2})$, the obstruction is evaluated by computing the number $\ell$ of $L$-structures on $Y=S^3_{p/q_1}(K_1)$, and by asking whether the inequality 
$$2g(K_2)-1 \ge \frac{|p|-\ell}{|q_2|}$$
is violated. If the answer is \lq Yes\rq, then  $(K_1,\frac{p}{q_1})$ and $(K_2,\frac{p}{q_2})$ are not surgery equivalent. 

A comparison of the Heegaard Floer homology groups for $S^3_{p/q_1}(K_1)$, $S^3_{p/q_2}(K_2)$ is of course a stronger obstruction to surgery equivalence, but it also involves more computation. This becomes especially prominent when $K_2$ is not fixed but allowed to vary across a family of knots. In such a happenstance, Theorem \ref{main} can be used as a significant shortcut to ruling out surgery equivalence. We illustrate this point with two examples. 
\begin{example} \label{ExampleOfLMinimizingKnots}
Consider a pair of surgery equivalent framed knots $(K_1,-\frac{p}{q_1})$ and $(K_2,-\frac{p}{q_2})$ with $p,q_i>0$, $\gcd (p,q_i)=1$ and $p-(2g(K_i)-1)q_i>0$.  
\begin{itemize}
\item[(i)] If $\tau(K_1)=g(K_1)$  then 
$$ 2g(K_2)-1 \ge \frac{q_1}{q_2} \cdot (2g(K_1)-1). $$
\item[(ii)] If $\tau(K_i)=g(K_i)$ for $i=1,2$
$$ 2g(K_2)-1 = \frac{q_1}{q_2} \cdot (2g(K_1)-1). $$
\end{itemize}
We are fixing the knot $K_1$  and allowing $K_2$ to vary through the family of all knots in $S^3$ (in part (i)) or through the family of knots with $\tau(K_2)=g(K_2)$ (in part (ii)). In each case, an application of Theorem \ref{main} gives considerable restrictions on the genera and framings involved. For instance, if $K_1$ and $K_2$ in case (ii) above are of equal Seifert genus $g$, then the surgery equivalence of $(K_1,-\frac{p}{q_1})$ and $(K_2,-\frac{p}{q_2})$ implies that  $q_1=q_2$. 
\end{example}
\begin{example} \label{ExampleOfNonLMinimizingKnots}
For positive integers $m,k$, let $K_{2m,2k+1}$ be the knot in Figure \ref{pic3}. It is easy to check that $g(K_{2m,2k+1}) = m$. In this example we apply Theorem \ref{main} to give a partial answer to the question: {\em When are the framed knots $(K_{2m,2k+1},\frac{p}{q_1})$ and $(K_{2n,2j+1},\frac{p}{q_2})$ surgery equivalent?} 

Assume that  $p,q_i, p-(2m-1)q_1, p-(2n-1)q_2$ are all positive. We will show in Section \ref{ExamplesSection} that 
$$ \ell = \ell \left( S^3_{-p/q_1}(K_{2m,2k+1}) \right) = p-mq_1.$$
Theorem \ref{main} gives the restriction 
\begin{equation}\label{ObstructionForExampleFour}
 \frac{q_2}{q_1} \ge \frac{m}{2n-1} ,
 \end{equation}
for any framed knot $(K_{2n,2j+1},-\frac{p}{q_2})$ surgery equivalent to $K(_{2m,2k+1},-\frac{p}{q_1})$. 

How good an obstruction is this? In Section \ref{ExamplesSection} we will demonstrate that with the choices of $p=4mn-1$, $q_1=n$, $q_2=m$ and $j=k$, the framed knot $(K_{2m,2k+1},-\frac{4mn-1}{n})$ is surgery equivalent to  $(K_{2n,2k+1},-\frac{4mn-1}{q_2})$ with $q_2=m$. Inequality \eqref{ObstructionForExampleFour} for an indeterminate $q_2$ becomes $q_2\ge mn/(2n-1)$ and is sharp for $n=1$. For values of $n>1$, we are not aware of values of $q_2$ with $\frac{mn}{2n-1} \le |q_2|<m$ that yield the desired surgery equivalence.
\begin{figure}[htb!] 
\centering
\includegraphics[width=10cm]{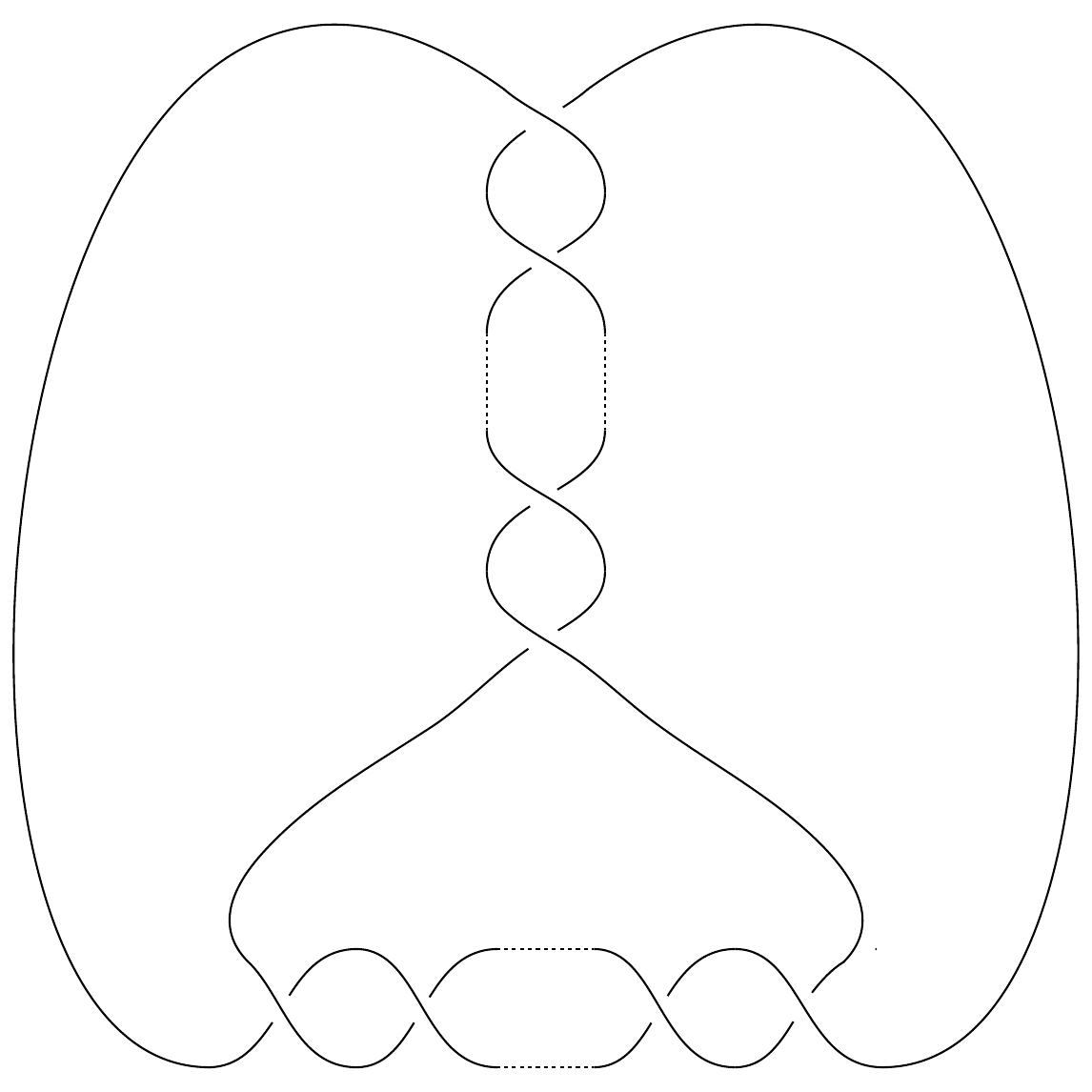}
\put(-215,0){$\underbrace{\phantom{iiiiiiiiiiiiiiiiiiiiiiiiiiiiiiiiiii}}$}
\put(-200,-15){\tiny $2k+1$ right-handed half twists.}
\put(-135,182){$\left. \begin{array}{c} \cr \cr\cr \cr\cr  \cr \cr \cr \cr \cr \end{array}\right\}$}
\put(-110,188){\tiny $2m$ left-handed}
\put(-110,178){\tiny half twists.}
\caption{The knot $K_{2m,2k+1}$ with $m,k \in \mathbb N$. }  \label{pic3}
\end{figure}
\end{example}
 
\subsection{Organization}
This article is organized into 4 sections. Section \ref{SectionBackgroundMaterial} discusses background material from Heegaard Floer homology. Section \ref{SectionProofs} is devoted to proving Theorem \ref{main} and Proposition \ref{PropositionAboutTau}. The final Section \ref{ExamplesSection} provides the missing calculations from the examples. 
\section{Background material} \label{SectionBackgroundMaterial}
\subsection{Homology of mapping cones} \label{hmc}
This section gathers a few facts about the homology of the mapping cone of a chain map between 
two chain complexes.
Let  
\begin{align} \nonumber
\mathcal{C} = \{ \ldots\to C_{i+1} \stackrel{\partial _{i+1}}{\to} C_i \stackrel{\partial _i}{\to} C_{i-1} \to \ldots\}, \cr
\mathcal{C}' = \{ \ldots\to C'_{i+1} \stackrel{\partial ' _{i+1}}{\to} C'_i \stackrel{\partial '_i}{\to} C '_{i-1} 
\to \ldots\}, 
\end{align}
be two finitely supported chain complexes of free Abelian groups. Let $f :  \mathcal{C} \to \mathcal{C}'$ 
be a chain map and let $f_i$ denote the restriction of $f$ to $C_i$. 
\begin{definition}
The mapping cone of $f:\mathcal C \to \mathcal C '$ is the complex 
$$\mathcal M =  \{ \ldots\to M_{i+1} \stackrel{D _{i+1}}{\longrightarrow} M_i \stackrel{D _i}{\longrightarrow} M_{i-1} \to \ldots\}
\quad \mbox{ with } \quad M_i = C_i \oplus C_{i+1} ' ,$$ 
and with $D=\{D_i\}_{i\in\mathbb N}$ defined as 
$$D_i(c,c') = \left( \partial _i c \, , \, \partial '_{i+1} c' + (-1)^i  f_i(c)\right),\quad (c,c') \in C_i \oplus C'_{i+1}.$$
\end{definition}
It is easy to verify that the maps $\iota:\mathcal C' \to \mathcal M$ and $\pi:\mathcal M \to \mathcal C$ 
defined by $ \iota(c') = (0,c')$ and $\pi(c,c') = c$ are chain maps that fit into the short exact sequence
$$0 \to C'_{i+1} \stackrel{\iota}{\to} M_i \stackrel{\pi}{\to} C_i \to 0. $$ 
The connecting homomorphism $\delta_i : C_i \to C_i'$ of this short exact sequence is given by $\delta _i = f_i$.
This discussion implies the next, easy but useful, theorem: 
\begin{theorem} \label{MappingConeTheorem}
Let $\mathcal M$ be the mapping cone of $f:\mathcal C \to \mathcal C'$. Then there is a long exact sequence
$$... \to H_{i+1}(\mathcal C)
 \stackrel{(f_{i+1})_*}{\to}  H_{i+1}(C')  \stackrel{\iota_*}{\to}  H_i(\mathcal M)  \stackrel{\pi_*}{\to}  H_i(\mathcal C)
 \stackrel{(f_i)_*}{\to}  H_i(\mathcal C') \to ...   $$
relating the homologies of $\mathcal C, \mathcal C'$ and $\mathcal M$. In particular, if $(f_{i+1})_* :H_{i+1} (\mathcal C)\to H_{i+1}(\mathcal C')$ is surjective then there is an isomorphism $ H_i(\mathcal M) \cong \kerr\, (f_i)_*$.
\end{theorem}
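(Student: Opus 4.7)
The plan is to verify the stated short exact sequence of chain complexes and then feed it into the usual snake-lemma machinery to extract the long exact sequence; the only point requiring care is identifying the connecting homomorphism with $f_\ast$ (up to sign).

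First I would check the three preliminary facts asserted just before the theorem statement. That $\iota$ and $\pi$ are chain maps follows immediately from the explicit formula for $D_i$: writing $\iota(c') = (0,c')$ one gets $D_i \iota(c') = (0, \partial'_{i+1}c')$, which is $\iota(\partial'_{i+1}c')$ up to the degree shift built into $\iota$; similarly $\pi D_i(c,c') = \partial_i c = \partial_i \pi(c,c')$. Exactness of $0 \to C'_{i+1} \to M_i \to C_i \to 0$ at each spot is visible from the direct-sum decomposition $M_i = C_i \oplus C'_{i+1}$.

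Next I would apply the standard construction of the long exact sequence in homology associated to a short exact sequence of chain complexes. All that needs to be pinned down is the connecting homomorphism $\delta_i : H_i(\mathcal C) \to H_{i+1}(\mathcal C')$. Given a cycle $c \in C_i$, lift it through $\pi$ to $(c,0) \in M_i$, apply $D_i$, and obtain
\[
D_i(c,0) = \bigl(\partial_i c,\, (-1)^i f_i(c)\bigr) = \bigl(0,\, (-1)^i f_i(c)\bigr),
\]
since $c$ is a cycle. Pulling this back through $\iota$ shows $\delta_i[c] = (-1)^i [f_i(c)]$, so $\delta_i = \pm (f_i)_\ast$. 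The sign is harmless: reindexing, one obtains the claimed sequence
\[
\cdots \to H_{i+1}(\mathcal C) \xrightarrow{(f_{i+1})_\ast} H_{i+1}(\mathcal C') \xrightarrow{\iota_\ast} H_i(\mathcal M) \xrightarrow{\pi_\ast} H_i(\mathcal C) \xrightarrow{(f_i)_\ast} H_i(\mathcal C') \to \cdots
\]

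For the last assertion, I would read it straight off the exact sequence. If $(f_{i+1})_\ast$ is surjective, exactness at $H_{i+1}(\mathcal C')$ forces $\iota_\ast = 0$, hence $\pi_\ast : H_i(\mathcal M) \to H_i(\mathcal C)$ is injective; exactness at $H_i(\mathcal C)$ identifies its image with $\ker (f_i)_\ast$, giving the isomorphism $H_i(\mathcal M) \cong \ker (f_i)_\ast$.

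There is no real obstacle here — this is a standard textbook computation — and the only place to trip up is keeping track of the degree shift between $C'_{i+1}$ and the $C'$-summand of $M_i$, together with the sign in the connecting map. Freeness of the chain groups is not needed for the exact sequence itself, though it is of course used elsewhere in the paper to apply this result to Heegaard Floer mapping cones.
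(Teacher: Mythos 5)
Your proposal is correct and follows essentially the same route as the paper, which likewise derives the long exact sequence from the short exact sequence $0 \to C'_{i+1} \to M_i \to C_i \to 0$ with connecting homomorphism identified (up to the sign $(-1)^i$, which the paper suppresses) with $f_i$, and then reads off the final claim from exactness. Nothing further is needed.
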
 
\subsection{The Heegaard Floer groups} \label{HeegaardFloerGroupsDoneRight}
In \cite{OzsvathSzabo1, OzsvathSzabo2} P. Ozsv\'ath and Z. Szab\'o introduced chain complexes $CF^\infty (Y,\s)$, $CF^\pm (Y,\s)$ and $\widehat{CF}(Y,\s)$ associated to the choice of a pointed Heegaard diagram $\left( \Sigma _g, \{\alpha _1,...,\alpha _g\},\{\beta_1,...,\beta_g\},z\right)$\footnote{Here $z\in \Sigma_g$ is a point chosen in the complement of the $\alpha$ and $\beta$ attaching curves.}  for the closed and oriented $3$-manifold $Y$, and a choice of spin$^c$-structure $\s \in Spin^c(Y)$. Their homology groups $HF^\infty (Y,\s)$, $HF^\pm (Y,\s)$ and $\widehat{HF}(Y,\s)$ are topological invariants of $(Y,\s)$ and are referred to as the {\em Heegaard Floer homology groups of $(Y,\s)$}. 

The complex $CF^\infty (Y,\s)$ is freely generated by pairs $[x,i]$ with $i\in \mathbb Z$ and $x$ chosen from a finite set $ \mathcal X$ determined by the Heegaard diagram, subject to the relation $\s_z(x)=\s$, with $\s_z: \mathcal X\to Spin^c(Y)$ a function described in Section 2.6 of \cite{OzsvathSzabo1}. The complex comes equipped with an action of the polynomial ring $\mathbb Z[U]$ defined on generators by $U\cdot [x,i] = [x,i-1]$. The differential $\partial ^\infty$ of this complex has the property that $\partial ^\infty [x,i]$ is a sum of terms $[y,j]$ with $j\le i$. Accordingly, the subgroup  $CF^-(Y,\s)$ of $CF^\infty(Y,\s)$ generated by those $[x,i]$ with $i<0$ is a subcomplex. Their quotient complex is  $CF^+(Y,\s)$, while $\widehat{CF}(Y,\s)$ is the kernel of the chain map $U:CF^+(Y,\s)\to CF^+(Y,\s)$. Alternatively, if we view $CF^+(Y,\s)$ as a $\mathbb Z$-filtered chain complex with filtration $\mathcal F _+([x,i]) = i$, then $\widehat{CF}(Y,\s)=\mathcal F_+^{-1}(0)$. 

When $c_1(\s)$ is a torsion element of $H^2(Y;\mathbb Z)$, the associated Heegaard Floer homology groups carry a $\mathbb Q$-grading and we write $HF^\circ _{(d)}(Y,\s)$ to distinguish the summand of $HF^\circ (Y,\s)$ in grading $d\in \mathbb Q$, with $\circ \in \{ \infty, \pm, \widehat{\phantom{HF}}\}$. We shall also write $HF^\circ (Y,\s) = A_{(d_1)}\oplus B_{(d_2)}\oplus ...$ to express the same meaning, where $A,B,...$ are Abelian groups, for instance $\widehat{HF}(S^3)\cong \mathbb Z_{(0)}$.
\subsection{The knot Floer homology groups} \label{SectionKnotFloerHomologyGroups}
Ozsv\'ath and Szab\'o in \cite{OzsvathSzabo7} and J. Rasmussen \cite{Rasmussen1} introduced chain complexes $CFK^\infty (Y,K,\t)$ and $\widehat{CFK}(Y,K,\t,j)$ associated to a doubly pointed Heegaard diagram $\left( \Sigma _g, \{\alpha _1,...,\alpha _g\},\{\beta_1,...,\beta_g\},z,w\right)$\footnote{Here $z,w \in \Sigma_g$ are points in the complement of the $\alpha$ and $\beta$ attaching curves, chosen with respect to the knot $K\subset Y$.} for the pair $(Y,K)$ consisting of a closed and oriented $3$-manifold $Y$ and a null homologous knot $K\subset Y$, along with the choice of a spin$^c$-structure $\t \in Spin^c(Y_0(K))$ (here $Y_0(K)$ denotes the manifold obtained by zero surgery on $K$) and an integer $j\in \mathbb Z$.  Their homology groups $HFK^\infty (Y,K,\t)$ and $\widehat{HFK}(Y,K,\t,j)$ are the {\em knot Floer homology groups of $(Y,K,\t)$}.

The complex $CFK^\infty (Y,K,\t)$ is freely generated by triples $[x,i,j]$ with $i,j\in \mathbb Z$ and with $x\in \mathcal X$ subject to the relation $\underline{\s}(x)+(i-j)PD[\mu] = \t$, where $\underline{\s}:\mathcal X\to Spin^c(Y_0(K))$ is a function defined in Section 2.3 of \cite{OzsvathSzabo7}, and $PD[\mu]\in H^2(Y_0(K);\mathbb Z)$ is the Poincar\'e dual of the meridian $\mu$ of $K$. This complex too has an action of $\mathbb Z[U]$ given on generators by $U\cdot [x,i,j]=[x,i-1,j-1]$, and its differential $\partial ^\infty$ also has the property that $\partial ^\infty [x,i,j]$ is a sum of terms $[y,k,\ell]$ with $k\le i$ and $\ell \le j$. Thus, the  subgroups $C\{i\le k \}$ and $C\{i\le k, \, j\le \ell\}$ generated by those $[x,i,j]$ with $i\le k$, and $i\le k$ and $j\le \ell$ respectively, are subcomplexes of $CFK^\infty(Y,K,\t)$.

The function $\mathcal F_K:CFK^\infty (Y,K,\t) \to \mathbb Z^2$ defined by $\mathcal F_K([x,i,j]) = (i,j)$ renders $CFK^\infty (Y,K,\t)$ a $\mathbb Z^2$-filtered complex. This filtration induces a  $\mathbb Z$-filtration $\mathcal F_2= \Pi_2\circ \mathcal F_K$ (with $\Pi_i:\mathbb Z^2\to \mathbb Z$ being projection onto the $i$-th summand) on the quotient complex
$$C\{i=0\} :=\frac{C\{i\le0\}}{C\{i\le -1\}}.$$
The associated graded object of this filtered chain complex is $\widehat{CFK}(Y,K,\t,m)$, that is 
$$\widehat{CFK}(Y,K,\t,m) = \frac{\mathcal F_2^{-1}(\langle -\infty, m])}{\mathcal F_2^{-1}(\langle -\infty, m-1])}.$$
The generators of $\widehat{CFK}(Y,K,\t,m)$ are those $[x,0,m]$ with $\underline{\s}(x)=\t+mPD[\mu]$. 

Note that there is an isomorphism $Spin^c(Y_0(K))\cong Spin^c(Y)\oplus \mathbb Z$ of affine spaces, which sends a spin$^c$-structure $\t\in Spin^c(Y_0(K))$ to a pair $(\s,n)$ with $\s\in Spin^c(Y)$ obtained by the unique extension of $\t\big|_{Y_0(K) - K}$ to $Y$, and with $n=\frac{1}{2} \langle c_1(\t),[\hat F]\rangle$ where $F\subset Y$ is a Seifert surface of $K$ and $\hat F\subset Y_0(K)$ is obtained from $F$ by capping it off with the meridional disk of the knot $\hat K$ which is the core of the solid torus filling. Under this isomorphism, the $Spin^c(Y)$ component of $\underline{\s}(x)$ is $\s_z(x)$.

The knot Floer homology chain complex $CFK^\infty(Y,K,\t)$ comes equipped with a {\em \lq\lq conjugation map\rq\rq}, that is an isomorphism  $ J : CFK^\infty (Y,K,\t) \to CFK^\infty(Y,K,\t)$ which commutes with $\partial ^\infty$ and the action of $\mathbb Z[U]$. Formally, $J$ is induced by a reversal of the string orientation on $K$, but we shall not need this. The isomorphism $J$ induces an isomorphism (still denote by $J$)
\begin{equation} \label{TheJIsomorphism} 
J:C\{j= 0\}  \to C\{i=0\}.
\end{equation}
For this reason, one can compute $\widehat{HFK}(Y,K,\t,m)$ from $C\{j=0\}$, viewed as a filtered complex (with filtration $[x,i,0]\mapsto i$).

In our computations $J$ shall only play a secondary role, indeed, we shall only need to use the fact that $J$ is an isomorphism. 

For later use, we define a sequence of special chain complexes extracted from $CFK^\infty (Y,K,\t)$. Let $k,\ell$ be two integers and let $C\{i\le k, j\le \ell\}$ be the subcomplex of $CFK^\infty(Y,K,\t)$ generated by those $[x,i,j]$ with $i\le k$ and $j\le \ell$. For $s\in \mathbb Z$, define the chain complexes $\hat A_s$ and $\hat B$ as 
\begin{equation} \label{DefinitionOfAHat}
\hat A_s = \frac{C\{i\le 0, j\le s\}}{C\{i\le -1,j\le s-1\}} \quad \text{ and } \quad \hat B = C\{i=0\}.
\end{equation}
These complexes come with accompanying chain maps $\hat v_s, \hat h_s:\hat A_s\to \hat B$ defined as 
\begin{equation} \label{DefinitionOfVHatAndHHat}
\hat v_s([x,i,j]) = \left\{
\begin{array}{cl}
[x,0,j] & ; i=0,\cr
0 & ; i\ne 0,
\end{array}
\right.
\quad \quad 
\hat h_s([x,i,j]) = \left\{
\begin{array}{cl}
J([x,i-j,0]) & ; j=s,\cr
0 & ; j\ne s.
\end{array}
\right.
\end{equation}
Thus $\hat v_s$ is simply the projection map from $\hat A_s$ onto $\hat B$, cutting of the portion of $\hat A_s$ generated by those $[x,i,j]$ with $i<0$. Similarly, $\hat h_s$ is given by the action of $U^{-s}$, followed by projection onto $C\{j=0\}$, followed by $J$. We remark that $\hat A_s\cong \hat B$ whenever $s\ge g(K)$ in which case $\hat v_s$ is an isomorphism. In particular for all $s\ge g(K)$, $H_*(\hat A_s) \cong \widehat{HF}(S^3)\cong\mathbb Z$. Using the conjugation isomorphism $J$, one finds similarly that $H_*(\hat A_s)\cong \mathbb Z$ for all $s\le -g(K)$ and that $\hat h_s$ is an isomorphism in this range. We shall rely on this facts tacitly going forward. 

As was the case with Heegaard Floer groups, the knot Floer groups too carry a rational grading, provided $c_1(\s)$ is torsion (with $\t = (\s,m)$) and we shall similarly write, for example, $\widehat{HFK}_{(d)}(Y,K,\t,m)$ to single out the grading $d$ term of $\widehat{HFK}(Y,K,\t,m)$. Or we shall write $\widehat{HFK}(Y,K,\t,m)\cong A_{(d_1)}\oplus B_{(d_2)}\oplus ...$ for the same thing.

Knot Floer homology of $(Y,K)$ can be thought of as a $\mathbb Z$-filtration on Heegaard Floer homology of $Y$. Namely, the projection $\Pi:CFK^\infty (Y,K,\t) \to CF^\infty (Y,\s)$  (with $\t=(\s,m)$ under the above isomorphism $Spin^c(Y_0(K))\cong Spin^c(Y)\oplus \mathbb Z$) given on generators $\Pi([x,i,j]) = [x,i]$, is an isomorphism of chain complexes, and the composition $\Pi_1\circ\mathcal F_K \circ\Pi^{-1}:CFK^\infty (Y,\s) \to \mathbb Z$ is a filtration on $CF^\infty (Y,\s)$. The same map renders $\widehat{CF}(Y,\s)$, $\hat A_s$ and $\hat B$ into $\mathbb Z$-filtered complexes.  Applying the Leray spectral sequence to these filtered chain complexes, we find that 
\begin{itemize} 
\item[(i)] There is a Leray spectral sequence whose $E^2$-term is isomorphic, as a $\mathbb Z[U]$-module, to $\widehat{HFK}(Y,K,\t)\otimes _\mathbb Z \mathbb Z[U,U^{-1}]$, and that converges to $HF^\infty (Y,\s)$, and respects the rational gradings when $c_1(\s)$ is torsion. By $\widehat{HFK}(Y,K,\t)$ we mean $\oplus _{m\in \mathbb Z}\, \widehat{HFK}(Y,K,\t,m)$.
\item[(ii)] There is a Leray spectral sequence whose $E^2$-term is isomorphic to $\widehat{HFK}(Y,K,\t)$ and that converges to $\widehat{HF} (Y,\s)$, and that respects the rational gradings when $c_1(\s)$ is torsion. 
\item[(ii')] There is a Leray spectral sequence whose $E^2$-term is isomorphic to $\oplus_{j\in \mathbb Z} \widehat{HFK}(Y,K,\t,j)\otimes U^j$ and converges to $\widehat{HF} (Y,\s)$, and that respects the rational gradings when $c_1(\s)$ is torsion. This sequence is isomorphic to that from (ii) by using the isomorphism $J$ from \eqref{TheJIsomorphism}. 
\item[(iii)] There is a Leray spectral sequence whose $E^2$-term is isomorphic to 
$$\left(\bigoplus _{j\le s} \widehat{HFK}(Y,K,\t,j)\right) \oplus \left( \bigoplus _{j>s}  \widehat{HFK}(Y,K,\t,j)\otimes U^{j-s} \right)$$
and that converges to $H_*(\hat A_s)$, and that respects the rational gradings when $c_1(\s)$ is torsion. 
\end{itemize}
These spectral sequences are powerful computational tools that we shall heavily rely on.

In the case of $Y=S^3$ we shall simplify notation and write $CFK^\infty(K)$ for $CFK^\infty (S^3,K,\t_0)$ with $\t_0\in Spin^c(S^3_0((K)))$ characterized by $c_1(\t_0)=0$. We shall also write $\widehat{CFK}(K,j)$ for $\widehat{CFK}(S^3,K,\t_0,j)$, and we use similar notation for the homologies of these two chain complexes. 

\subsection{The rational surgery formula} 
This section describes the algorithm from \cite{OzsvathSzabo21} for the computation of $\widehat{HF}(S^3_{p/q}(K),\s)$ for a knot $K$ in $S^3$.

 For $i\in \mathbb{Z}$ let 
$$ \hat{\mathbb{A}}_i=\oplus _{s\in \mathbb{Z}} ( s,\hat A_{\lfloor \frac{i+ps}{q} \rfloor} ) \quad \quad 
 \text{ and } \quad \quad \hat{\mathbb{B}}= \hat{\mathbb{B}}_i=\oplus _{s\in \mathbb{Z}} (s,\hat B).\footnote{The index $i$ in $\hat{\mathbb{B}}_i$ if for bookkeeping purposes only, the complex $\hat{\mathbb{B}}_i$ is independent of $i$. When convenient we will write $\hat{\mathbb{B}}$ instead of $\hat{\mathbb{B}}_i$. }  $$ 
In the above, both $(k,\hat A_\ell )$ and $(k,\hat B)$ denote copies of $\hat A_\ell$ and $\hat B$ respectively and $\lfloor x \rfloor$ is the largest integer smaller than or equal to $x$. We use the maps $\hat v_k, \hat h_k : \hat A_k \to \hat B$  to define maps $\hat v, \hat h:\hat{\mathbb{A}}_i \to \hat{\mathbb{B}}_i$ by the convenction that $\hat v$ maps  $( s,\hat A_{\lfloor \frac{i+ps}{q} \rfloor})$ to $(s, \hat B)$ via $\hat v_{\lfloor \frac{i+ps}{q} \rfloor}$, while $\hat h$ maps $( s,\hat A_{\lfloor \frac{i+ps}{q} \rfloor} )$ to $(s-1, \hat B)$ via 
$\hat h_{\lfloor \frac{i+ps}{q} \rfloor}$.
Setting $r=p/q$, we define the chain map $\hat D_{i,r}:\hat{\mathbb{A}}_i\to \hat{\mathbb{B}}_i$ as 
$$ \hat D_{i,r} \left( \{(s,a_s)\}_{s\in \mathbb{Z}} \right) = \{(s,b_s)\}_{s\in \mathbb{Z}} \quad \quad 
\mbox{ with } \quad \quad 
b_s = \hat v_{\lfloor \frac{i+ps}{q} \rfloor}(a_s) + \hat h_{\lfloor \frac{i+p(s-1)}{q} \rfloor} (a_{s-1}). $$
Let $\hat{\mathbb{X}}_{i,r}$ be the mapping cone of $\hat D_{i,r}$. Note that that $\mathbb{X}_{i,r} = \mathbb{X}_{i',r}$ whenever $i$ and $i'$ are congruent modulo $p$. Given this we further modify our notation to $\hat{\mathbb{X}}_{[i],r}$ where $[i]\in \mathbb Z/p\mathbb Z$ is the equivalence class of $i\in \mathbb Z$ modulo $p$. 
\begin{theorem}[Ozsv\'ath-Szab\'o \cite{OzsvathSzabo21}] \label{RationalSurgeryTheorem}
Let $K\subset S^3$ be a knot and let $p,q\in\mathbb{Z}$ be a pair of relatively prime, nonzero integers. Then there is an affine identification of $Spin^c(S^3_{p/q}(K))$ with $\mathbb Z/p\mathbb Z$, with respect to which there is an isomorphism  
$$ \widehat{HF}(S^3_{p/q}(K),\s) \cong H_*(\hat{\mathbb{X}}_{[i],r}).$$
\end{theorem}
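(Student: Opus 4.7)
The plan is to establish the rational surgery formula by reducing it to the \emph{integer} surgery formula, which in turn is reduced to the \lq\lq large surgery\rq\rq{} identification between $\widehat{HF}$ of a surgery and a subquotient of the knot Floer complex. First, for $|n| \gg g(K)$, counting holomorphic triangles in a triple Heegaard diagram $(\Sigma,\alpha,\beta,\gamma)$ adapted to the $n$-framed surgery yields a direct isomorphism $\widehat{HF}(S^3_n(K),\s_{[i]}) \cong H_*(\hat A_{s_0})$ for a single $s_0$ depending on $i$ and $n$; this handles the case where the mapping cone has only one nonzero column.

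Next I would extend to arbitrary integer $n$ by induction, using the surgery exact triangle relating $\widehat{HF}(S^3)$, $\widehat{HF}(S^3_n(K))$, and $\widehat{HF}(S^3_{n+1}(K))$, and identifying its connecting maps with the natural $\hat v_s$ and $\hat h_s$ coming, respectively, from truncation of $C\{i\le 0,j\le s\}$ onto $C\{i=0\}$ and from multiplication by $U^{-s}$ followed by the conjugation isomorphism $J$. A telescoping argument consolidates these individual triangles into the single mapping cone $\hat D_{[i],n}$, giving the formula in the case $q=1$.

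To handle rational $p/q$, I would realize $S^3_{p/q}(K)$ as integral surgery on a two-component link $K \cup c$, where $c$ is an unknot linking $K$ once, with framings dictated by the continued fraction expansion of $p/q$. Invoking the link surgery formula (proved by an analogous polygon-counting argument on a larger Heegaard diagram) and carrying out the explicit cancellations coming from the unknotted component, one sees that the only surviving datum from $c$ is a reindexing of the $\hat A$ summands by the floor function $\lfloor (i+ps)/q\rfloor$. The asymmetric shift between $\hat v$ (mapping $(s,\ast)\mapsto(s,\ast)$) and $\hat h$ (mapping $(s,\ast)\mapsto(s-1,\ast)$) arises because the meridian of $K$ in $S^3_{p/q}(K)$ acquires $q$ units of twisting relative to the longitude, so the $\hat h$-side map must land in the next cyclic translate of the $s$-axis.

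The hardest step, and the one I would expect to consume the bulk of the work, is verifying the compatibility of the holomorphic triangle and polygon maps with the algebraic mapping cone structure: namely, showing that higher-order polygon contributions either vanish by admissibility and energy bounds or else combine into chain homotopies that are invisible on homology, so that only the triangle-level maps $\hat v_s$ and $\hat h_s$ survive in $\hat D_{[i],r}$. This pseudo-holomorphic bookkeeping is the technical heart of Ozsv\'ath--Szab\'o's argument in \cite{OzsvathSzabo21}, and is why Theorem \ref{RationalSurgeryTheorem} is quoted here rather than reproved.
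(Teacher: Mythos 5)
The paper offers no proof of Theorem \ref{RationalSurgeryTheorem}: it is quoted from Ozsv\'ath--Szab\'o \cite{OzsvathSzabo21} and used as a black box, so there is no in-paper argument to compare your proposal against. Judged as an outline of the external proof, your sketch has the right overall architecture --- the large surgery isomorphism $\widehat{HF}(S^3_n(K),[i])\cong H_*(\hat A_s)$, assembly of the integer surgery formula from exact triangles whose connecting maps are identified with $\hat v_s$ and $\hat h_s$, and reduction of the rational case to the integral one --- but two details differ from what Ozsv\'ath and Szab\'o actually do. First, the integer-framing induction does not step from $n$ to $n+1$; it uses the surgery exact triangle relating $S^3_n(K)$, $S^3$, and a \emph{large} surgery $S^3_{n+m}(K)$ (refined over the $m$ spin$^c$-structures of the latter), which is precisely what allows the large-surgery identification to enter. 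Second, the rational case in \cite{OzsvathSzabo21} is not handled by a general link surgery formula (that is later Manolescu--Ozsv\'ath machinery); instead one realizes $S^3_{p/q}(K)$ as an integral surgery on the induced knot in the lens space obtained by surgering a meridian of $K$, and applies the integer surgery formula for null-homologous knots in rational homology spheres, computing the knot Floer complexes of that induced knot in terms of $CFK^\infty(K)$ --- this is where the reindexing by $\lfloor (i+ps)/q\rfloor$ and the shift in $\hat h$ actually come from. Finally, note that your proposal is a road map rather than a proof: all three stages defer the holomorphic-polygon analysis that, as you yourself observe, is the real content. Since the paper's intent is only to cite the theorem, that is acceptable in context, but it should not be presented as a derivation.
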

Theorem \ref{RationalSurgeryTheorem} is the main instrument for the proofs of our results, and we pause before proceeding to give a simple example. The key ingredient to using Theorem \ref{RationalSurgeryTheorem} is an understanding of the groups $H_*(\hat A_s)$ (the homology $H_*(\hat B)$ is isomorphic to $\mathbb Z_{(0)}$ for knots in $S^3$) and the maps $\hat v_s, \hat h_s$. We shall get a handle on both by using of the Leray spectral sequences from Section \ref{SectionKnotFloerHomologyGroups}.  

To begin with, we introduce a  useful way of conceptualizing the chain complex $CFK^\infty (K)$. Namely, we represent each of the groups $\widehat{HFK}(K,j)$ by a dot in a coordinate plane, placed at the point $(0,j)$. We let $U$ act by translation by the vector $(-1,-1)$, and thus fill out a diagonal region of the plane with dots representing the groups $\widehat{HFK}(K,j)\otimes U^m$ (the latter sitting at coordinates $(-m,-m+j)$). We shall refer to the horizontal coordinate as the $i$-coordinate and the vertical one as the $j$-coordinate. Thus, the groups in the entire $ij$-plane are the $E^2$ term of the Leray spectral sequence converging to $HF^\infty (S^3)$, the $j$-axis contains the $E^2$ term converging to $\widehat{HF}(S^3)$ and the \lq\lq angle\rq\rq of points $(i,j)$ with $\min (i,j-s)=0$ represents the $E^2$ term converging to $H_*(\hat A_s)$. Knowing the $E^\infty$ terms of the first two sequences typically lets one pin down the differentials of the higher order terms of the spectral sequence, and use the third of these sequences to compute $H_*(\hat A_s)$. Here is an example. 

\begin{example} Consider the $(3,4)$ torus knot $K=T_{(3,4)}$. It's knot Floer homology can be computed from the results of \cite{OzsvathSzabo12}:
$$\widehat{HFK}(T_{(3,4)},j) \cong \left\{
\begin{array}{cl}
\mathbb Z_{(0)} & \quad ; \quad j=3, \cr
\mathbb Z_{(-1)} & \quad ; \quad j=2, \cr
\mathbb Z_{(-2)} & \quad ; \quad j=0, \cr
\mathbb Z_{(-5)} & \quad ; \quad j=-2, \cr
\mathbb Z_{(-6)} & \quad ; \quad j=-3. \cr
\end{array}
\right. 
$$
The spectral sequence (ii) from Section \ref{SectionKnotFloerHomologyGroups} converging to $\widehat{HF}(S^3)\cong \mathbb Z_{(0)}$ shows that there are two \lq\lq vertical\rq\rq \, higher differentials on its $E^2$ term, namely $d_2:\mathbb Z_{(-1)} \to \mathbb Z_{(-2)}$ and $d_2:\mathbb Z_{(-5)} \to \mathbb Z_{(-6)}$, both isomorphisms, and the spectral sequence abuds after this level. From the spectral sequence (ii'), we find similar \lq\lq horizontal\rq\rq\, differentials. Finally, applying sequence (i), shows that there are no further \lq\lq diagonal\rq\rq differentials in the $ij$-plane, see Figure \ref{pic4}.
\begin{figure}[htb!] 
\centering
\includegraphics[width=12cm]{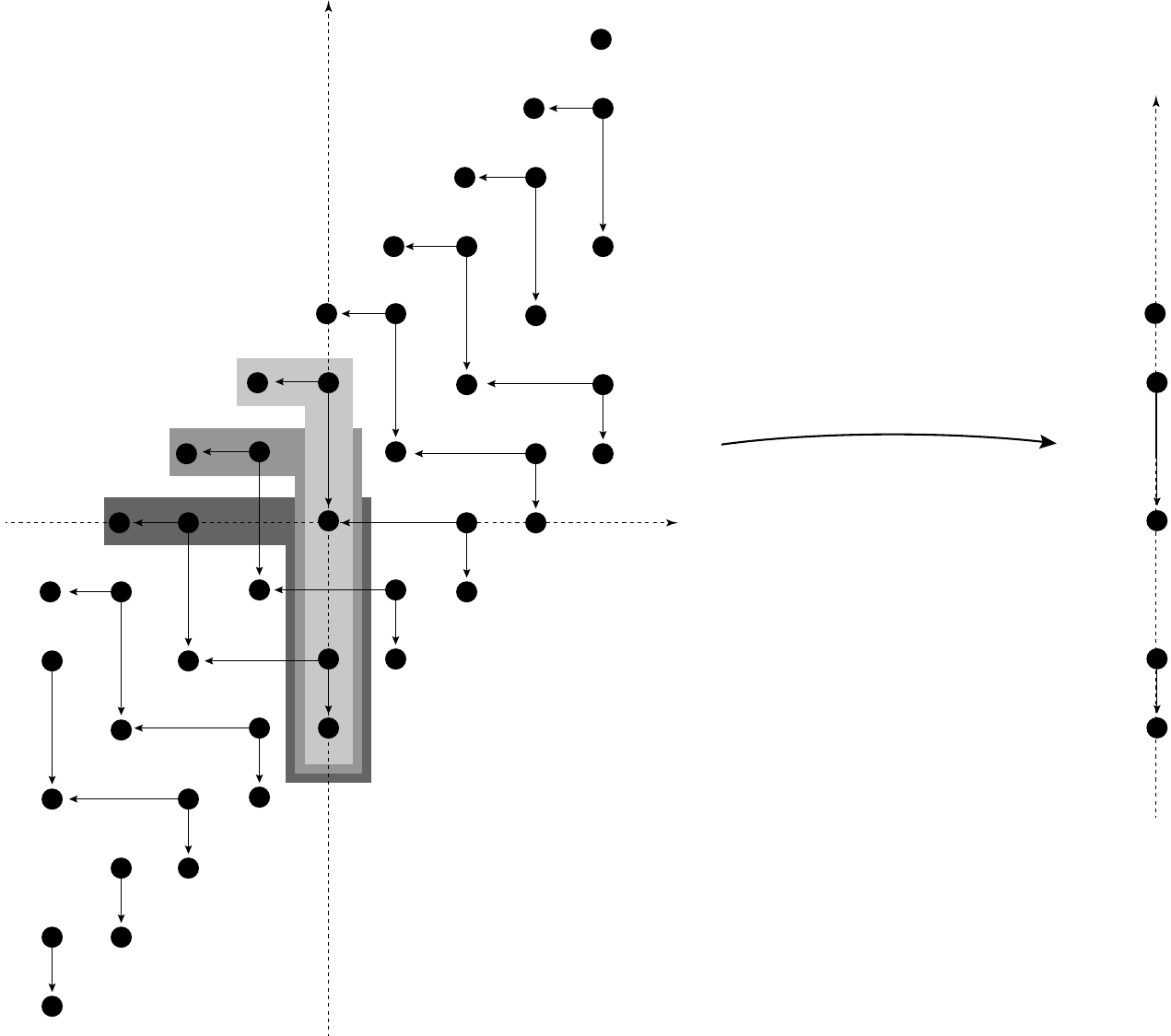}
\put(-100,180){$\hat v_s, \hat h_s$}
\caption{Example of the $(3,4)$-torus knot. The homology of the shaded regions give $H_*(\hat A_s) \cong \mathbb Z$ for $s=2$ (lightest shading), $s=1$ (medium shading) and $s=0$ (darkest shading). In each case $\hat v_s=0$ and $\hat h_s=0$.}  \label{pic4}
\end{figure}

Having used spectral sequences (i), (ii) and (ii') from Section \ref{SectionKnotFloerHomologyGroups} to pin down the differentials in the sequence (i), we now turn to the spectral sequence (iii) to compute the homologies $H_*(\hat A_s)$. Using Figure \ref{pic4}, it is now easy to find that $H_*(\hat A_s)\cong \mathbb Z$ for all $s\in \mathbb Z$ and that 
$$\hat v_s =  \left\{
\begin{array}{cl}
\text{id} & \quad ; \quad s\ge 3, \cr
0 & \quad ; \quad s\le 2,
\end{array}
\right. 
\quad \text{ and } \quad 
\hat h_s =  \left\{
\begin{array}{cl}
0 & \quad ; \quad s\ge -2, \cr
\text{id} & \quad ; \quad s\le -3.
\end{array}
\right. 
$$
With these in place, it is now easy to use Theorem \ref{RationalSurgeryTheorem}. We invite the reader to check that for instance $\widehat {HF}(S^3_{-1}(T_{(3,4)}))\cong \mathbb Z^{11}$. 
\end{example}
%
\subsection{The Ozsv\'ath-Szab\'o $\tau$-invariant} \label{SectionTheTauInvariant}
In \cite{OzsvathSzabo11} Ozsv\'ath and Szab\'o introduced a concordance invariant $\tau(K)$ for a knot $K$ in $S^3$. It is defined as 
$$\tau(K) = \min \{j\in \mathbb Z  \, |\, (\iota_j)_* :H_*(\mathcal F_K^{-1}(-\infty,j])) \to \widehat{HF}(S^3) \text{ is nontrivial.} \},$$
where $\iota_j:\mathcal F_K^{-1}(-\infty,j]) \to \widehat{CF}(S^3)$ is the inclusion map. The definition is well posed as $\iota_j$ is an isomorphism for all sufficiently large $j$. 
\section{Proofs} \label{SectionProofs}
\subsection{Proof of Theorem \ref{main}}
As in the setup of Theorem \ref{main}, let $K\subset S^3$ be a knot of genus $g\ge 1$, and let $p,q$ be two nonzero and relatively prime integers with $p>0$. Let $Y=S^3_{p/q}(K)$ be the results of $p/q$-framed Dehn surgery on $K$ and let $\ell$ be the number of $L$-structures on $Y$. Recalling the identification $Spin^c(S^3_{p/q}(K))$ with $\mathbb Z/p\mathbb Z$ from Theorem \ref{RationalSurgeryTheorem}, we shall label spin$^c$-structures on $S^3_{p/q}(K)$ by $[i]$, the equivalence class in $\mathbb Z/p\mathbb Z$ of the integer $i$. Our goal then is to demonstrate the validity of the inequality 
\begin{equation} \label{maininequality2}
2g-1 \ge \frac{p-\ell}{|q|}.
\end{equation}

We note firstly that is suffices to establish \eqref{maininequality2} for  $q>0$. For if $q<0$ and $Y=S^3_{p/q}(K)$, then $-Y = S^3_{-p/q}(\bar K)$ where $\bar K$ is the mirror image of $K$. The genus of $\bar K$ equals that of $K$, and the number of $L$-structures on $-Y$ equals that on $Y$ \cite{OzsvathSzabo2}.
Thus, inequality \eqref{maininequality2} for $p/q$-surgery on $K$ is established by establishing it for $-p/q$-surgery on $\bar K$.  

Assuming $p,q>0$, we prove Theorem \ref{main} following a twofold strategy:
\begin{itemize}
\item[(a)] We shall first count the number of spin$^c$-structures $[i]\in Spin^c(S^3_{p/q}(K))$ for which either $\lfloor \frac{i+ps}{q}\rfloor \ge g$ or $\lfloor \frac{i+ps}{q}\rfloor \le -g$ for all values $s\in \mathbb Z$, and show that there is $\max (p-(2g-1)q,0)$ such spin$^c$-structures. 
\item[(b)] We shall show that each spin$^c$-structure $[i]\in Spin^c(S^3_{p/q}(K))$ from part (a) is an $L$-structure. 
\end{itemize}
Put together, these two claims show that the number $\ell$ of $L$-structures on $Y$ satisfies the inequality
$$\ell \ge \left\{ 
\begin{array}{cl}
p-(2g-1)q & \quad ; \quad p-(2g-1)q \ge 0, \cr
0 & \quad ; \quad p-(2g-1)q\le 0. 
\end{array}
\right. 
$$
If $p-(2g-1)q\ge 0$ then the inequality $\ell \ge p-(2g-1)q$ readily transforms into inequality \eqref{maininequality2}. While the inequality $\ell \ge 0$ is without content, it occurs when $p-(2g-1)q\le 0$ giving $2g-1\ge p/q$ and clearly $p/q\ge (p-\ell)/q$, establishing \eqref{maininequality2} once more.  With this understood, we turn to proving Claims (a) and (b). 

\begin{lemma} \label{LemmaAux1}
Let $p,q$ be positive integers. Then there exist spin$^c$-structures $[i]\in Spin^c(S^3_{p/q}(K))$ for which either $\lfloor \frac{i+ps}{q}\rfloor \ge g$ or $\lfloor \frac{i+ps}{q}\rfloor \le -g$ for all values $s\in \mathbb Z$, precisely when $p> (2g-1)q$. If this inequality is met, then the said spin$^c$-structures $[i]$ are the equivalence classes of 
the set $\{gq,...,p+q-gq-1\}$. In particular there are $p-(2g-1)q$ such spin$^c$-structures.  
\end{lemma}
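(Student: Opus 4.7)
The plan is to translate the floor-function condition into an elementary statement about residues modulo $p$ and then count. First I would observe that the hypothesis on $[i]$ is equivalent to requiring that $\lfloor (i+ps)/q\rfloor$ avoid the set $\{-g+1,\ldots,g-1\}$ for every $s\in\mathbb{Z}$. By the very definition of the floor, this is the same as
\[
i + ps \notin \bigl[(-g+1)q,\ gq\bigr) \quad \text{for every } s \in \mathbb{Z}.
\]

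Next, since $\{i+ps : s\in\mathbb{Z}\}$ is precisely the residue class of $i$ modulo $p$, the displayed condition is equivalent to asking that the interval $[(-g+1)q,\ gq-1]$ of $(2g-1)q$ consecutive integers contain no representative of the class $[i]\in\mathbb{Z}/p\mathbb{Z}$. The image in $\mathbb{Z}/p\mathbb{Z}$ of any block of $N$ consecutive integers is a cyclic arc of length $\min(N,p)$, so the ``bad'' residues form an arc of length $\min((2g-1)q,\ p)$. When $(2g-1)q\ge p$ this arc exhausts $\mathbb{Z}/p\mathbb{Z}$, proving that no good $[i]$ exists in this range; this handles the converse direction of the ``precisely when'' statement. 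When $p>(2g-1)q$, the bad arc omits exactly $p-(2g-1)q>0$ residues, and these are the good spin$^c$-structures, establishing the existence claim and the count.

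Finally, to identify the good residues explicitly, I would note that the bad integers are the consecutive block $(-g+1)q,\ (-g+1)q+1,\ \ldots,\ gq-1$, so the complementary residues in $\mathbb{Z}/p\mathbb{Z}$ are represented by the consecutive block of integers immediately following the bad one, namely $gq,\ gq+1,\ \ldots,\ (-g+1)q + p - 1$. Since $(-g+1)q + p - 1 = p+q-gq-1$, this is precisely the set of representatives named in the lemma.

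The only subtlety is the wrap-around bookkeeping: one must check that the proposed $p-(2g-1)q$ representatives really are pairwise distinct modulo $p$ (immediate, since $p-(2g-1)q \le p$) and that together with the bad block they account for every residue class (an elementary disjoint-union count, valid precisely because $p>(2g-1)q$). This is the only place care is needed, and it presents no serious obstacle.
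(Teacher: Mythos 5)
Your proof is correct, and it reaches the conclusion by a genuinely different route from the paper's. The paper exploits the monotonicity of $s\mapsto \lfloor \frac{i+ps}{q}\rfloor$ (valid since $p,q>0$), together with the sign observation that this quantity is negative for $s<0$ and nonnegative for $s\ge 0$ once $i\in\{0,\dots,p-1\}$, to reduce the ``for all $s$'' condition to the two extremal constraints $\lfloor i/q\rfloor\ge g$ and $\lfloor (i-p)/q\rfloor\le -g$, and then intersects the two resulting explicit index sets. You instead translate the forbidden floor values into the block of $(2g-1)q$ consecutive integers $[(-g+1)q,\,gq-1]$ and recast the hypothesis as the requirement that the residue class of $i$ modulo $p$ avoid this block; the dichotomy, the count, and the explicit set $\{gq,\dots,p+q-gq-1\}$ then all drop out of the single fact that a block of $N$ consecutive integers covers $\min(N,p)$ residues. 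Your version handles both directions of the ``precisely when'' in one stroke and makes the structure of the complement transparent, which is arguably cleaner. The one thing the paper's argument buys that yours does not record is the finer information that for the good classes the floor is $\ge g$ precisely for $s\ge 0$ and $\le -g$ precisely for $s<0$; that refinement is what gets reused in the proof of Lemma \ref{LemmaAux2}, so if you intended your lemma to feed into that argument you would want to note it separately (it follows immediately from your setup, but it is not stated).
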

\begin{proof}
Let $[i]\in \mathbb Z/p\mathbb Z$ be a spin$^c$-structure, and for simplicity let us agree to choose $i$ from the set $\{0,...,p-1\}$. Since $p$ and $q$ have been assumed to be positive, then the function $s\mapsto \lfloor \frac{i+ps}{q}\rfloor$ is non-decreasing. Additionally, note that $\lfloor \frac{i+ps}{q}\rfloor<0$ for $s<0$ and $\lfloor \frac{i+ps}{q}\rfloor\ge 0$ for $s\ge 0$. Accordingly, 
$$ \lfloor \textstyle \frac{i+ps}{q}\rfloor\le \lfloor \frac{i-p}{q}\rfloor, \text{ for all } s\le -1 \quad \quad \text{ and } \quad \quad  \lfloor \textstyle \frac{i+ps}{q}\rfloor\ge \lfloor \frac{i}{q}\rfloor, \text{ for all } s\ge 0.$$ 
Therefore, if $i\in \{0,...,p-1\}$ is such that $ \lfloor \textstyle \frac{i-p}{q}\rfloor\le-g$ and $ \lfloor \textstyle \frac{i}{q}\rfloor\ge g$, it will satisfy the requirement of the lemma. The set of $i\in \{0,...,p-1\}$ that obey the inequality $ \lfloor \textstyle \frac{i}{q}\rfloor\ge g$ is $\{gq,...,p-1\}$ if $gq\le p-1$ (and is otherwise empty), while the set of those $i$ for which $ \lfloor \textstyle \frac{i-p}{q}\rfloor\le-g$, is given by $\{0,...,p+q-gq-1\}$ if $p+q-gp-1\ge 0$ (and is otherwise empty). The intersection of these two sets is $\{gq,...,p+q-gq-1\}$ which has cardinaliy $p-(2g-1)q$, and is nonempty if and only if $p-1\ge gq$ and $p+q-1\ge gq$ and $p+q>2gq$. The third of these inequalities implies the first two, and the claim of the lemma follows.
\end{proof}
\begin{lemma} \label{LemmaAux2}
Let $K\subset S^3$ be a knot of genus $g\ge 1$ and let $p,q$ be relatively prime integers. Let $[i]\in Spin^c(S^3_{p/q}(K))$ be a spin$^c$-structure on $Y=S^3_{p/q}(K)$ for which either $\lfloor \frac{i+ps}{q}\rfloor \ge g$ or $\lfloor \frac{i+ps}{q}\rfloor \le -g$ for all values $s\in \mathbb Z$. Then $[i]$ is an $L$-structure. 
\end{lemma}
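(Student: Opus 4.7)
The plan is to apply Theorem \ref{RationalSurgeryTheorem} to identify $\widehat{HF}(S^3_{p/q}(K),[i])$ with $H_*(\hat{\mathbb{X}}_{[i],r})$, the homology of the mapping cone of $\hat D_{i,r}:\hat{\mathbb{A}}_i\to\hat{\mathbb{B}}_i$, and show the right-hand side is free abelian of rank one. As in the opening paragraph of the proof of Theorem \ref{main}, we may assume $p,q>0$, so that the sequence $a_s:=\lfloor(i+ps)/q\rfloor$ is non-decreasing with $a_s\to\pm\infty$ as $s\to\pm\infty$. The hypothesis on $[i]$ then pins down a unique integer $s_0$ with $a_s\le -g$ for $s<s_0$ and $a_s\ge g$ for $s\ge s_0$.

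The next step is to compute the induced map $(\hat D_{i,r})_*$ on homology. In each summand, $H_*(\hat A_{a_s})\cong\mathbb Z$ is generated by a class $\alpha_s$, and $H_*((s,\hat B))\cong\mathbb Z$ is generated by $\beta_s$. The standard input recalled in Section \ref{SectionKnotFloerHomologyGroups} is that for $t\ge g$ the map $(\hat v_t)_*$ is an isomorphism while $(\hat h_t)_*$ is zero on homology, with the roles reversed for $t\le -g$. The vanishing of these ``off-diagonal'' components is the delicate point; it can be verified either by direct inspection of the defining formulas for $\hat v_t,\hat h_t$ or, more conceptually, by a $\mathbb Q$-grading argument noting that the two potentially competing images would land in different absolute gradings of the cyclic group $H_*(\hat B)$, forcing all but one to vanish. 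Using the formula defining $\hat D_{i,r}$, in which the $\hat h$-contribution to the $s$-th summand of $\hat{\mathbb{B}}_i$ comes from the $(s-1)$-st summand of $\hat{\mathbb{A}}_i$, we conclude that $(\hat D_{i,r})_*(\alpha_s)=\pm\beta_s$ for $s\ge s_0$ (via $\hat v$) and $(\hat D_{i,r})_*(\alpha_s)=\pm\beta_{s+1}$ for $s<s_0$ (via $\hat h$).

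A direct accounting then shows that every generator $\beta_t$ lies in the image of $(\hat D_{i,r})_*$, with $\beta_{s_0}$ hit twice (once by $\alpha_{s_0}$ via $\hat v$ and once by $\alpha_{s_0-1}$ via $\hat h$) and every other $\beta_t$ hit exactly once. Hence $\mathrm{coker}(\hat D_{i,r})_*=0$ while $\ker(\hat D_{i,r})_*\cong\mathbb Z$, generated by the single relation $\pm\alpha_{s_0}\pm\alpha_{s_0-1}=0$. Feeding this into the mapping cone long exact sequence of Theorem \ref{MappingConeTheorem} yields a short exact sequence
\[
0\to\mathrm{coker}(\hat D_{i,r})_*\to H_*(\hat{\mathbb{X}}_{[i],r})\to\ker(\hat D_{i,r})_*\to 0,
\]
which splits since the rightmost term is free, giving $H_*(\hat{\mathbb{X}}_{[i],r})\cong\mathbb Z$ and confirming that $[i]$ is an $L$-structure. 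The only nontrivial step is justifying the vanishing of the off-diagonal homology maps $(\hat h_t)_*$ for $t\ge g$ and $(\hat v_t)_*$ for $t\le -g$; once that is in hand, the rest is bookkeeping on the mapping cone.
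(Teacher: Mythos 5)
Your computation for $p,q>0$ is essentially the paper's own proof of Lemma \ref{LemmaAux2}: the induced map $(\hat D_{i,r})_*$ is surjective, $\beta_{s_0}$ is hit twice and every other $\beta_t$ exactly once, and Theorem \ref{MappingConeTheorem} gives $H_*(\hat{\mathbb X}_{[i],r})\cong\ker(\hat D_{i,r})_*\cong\mathbb Z$, generated by the single relation between $\alpha_{s_0}$ and $\alpha_{s_0-1}$. (The paper normalizes $i\in\{0,\dots,p-1\}$ so that $s_0=0$, which is cosmetic.) The vanishing of $(\hat h_t)_*$ for $t\ge g$ and of $(\hat v_t)_*$ for $t\le -g$ is asserted in the paper as well, so your grading justification of that point is, if anything, more explicit than the source.

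The gap is the opening sentence ``we may assume $p,q>0$.'' The mirroring reduction at the start of the proof of Theorem \ref{main} reduces the \emph{inequality} to the case $q>0$, using only that $g(\bar K)=g(K)$ and that $Y$ and $-Y$ have the same number of $L$-structures; it says nothing about how an individual spin$^c$-structure $[i]$, the identification of $Spin^c(Y)$ with $\mathbb Z/p\mathbb Z$, or the floor-function hypothesis transform under $K\mapsto\bar K$, $p/q\mapsto -p/q$. Without tracking that (which requires comparing the $\hat A_s$, $\hat v_s$, $\hat h_s$ data of $K$ and $\bar K$), the negative-coefficient case of the lemma does not follow from the positive one --- and it cannot be discarded, since the lemma is invoked with coefficient $-p/q$ in the proof of Proposition \ref{PropositionAboutTau}. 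Moreover, your bookkeeping genuinely fails when $q<0$: the sequence $s\mapsto\lfloor(i+ps)/q\rfloor$ is then non-increasing, the roles of $\hat v$ and $\hat h$ reverse, one generator $\beta_t$ is missed entirely while every other is hit exactly once, so $(\hat D_{i,r})_*$ is \emph{injective} and the isomorphism $H_*(\hat{\mathbb X}_{[i],r})\cong\mathbb Z$ comes from the cokernel rather than the kernel. This is precisely how the paper handles the second case --- as a separate, short, direct computation --- and you should either do the same or supply the missing mirroring bookkeeping.
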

\begin{proof}
Assume firstly that $p,q$ are positive. 
For convenience let us choose $i$ from the set $\{0,...,p-1\}$ so that the assumption of the lemma leads to  $\lfloor \frac{i+ps}{q}\rfloor \ge g$ for all $s\ge 0$ and  $\lfloor \frac{i+ps}{q}\rfloor \le -g$ for all $s<0$. Accordingly, 
$$H_*\left(\hat A_{\lfloor \frac{i+ps}{q}\rfloor} \right) \cong \left\{
\begin{array}{cl}
H_*(C\{i =  0\}) \cong \mathbb Z & \quad ; \quad s\ge 0, \cr
& \cr 
H_*(C\left\{ j=\lfloor \frac{i+ps}{q}\rfloor \right\}) \cong \mathbb Z & \quad ; \quad s<0.
\end{array}
\right.
$$
Additionally, the maps in homology induced by  $\hat v_k$ and $\hat h_k$ (still denoted $\hat v_k$ and $\hat h_k$) are given by 
$$\hat v_{\lfloor \frac{i+ps}{q}\rfloor} = \left\{
\begin{array}{cl}
\text{id} & ; \quad s\ge 0, \cr
0 & ; \quad s<0,
\end{array}
\right. 
\quad \quad \text{ and } \quad \quad
\hat h_{\lfloor \frac{i+ps}{q}\rfloor} = \left\{
\begin{array}{cl}
0 & ; \quad s\ge 0, \cr
J\circ \Pi_{C\{j=0\}}\circ U^{\lfloor \frac{i+ps}{q}\rfloor} & ; \quad s<0,
\end{array}
\right. 
$$
with $\Pi_{C\{j=0\}}$ being projection onto $C\{j=0\}$. Though we may not know the map $J$ explicitly, we note that $\hat h_{\lfloor \frac{i+ps}{q}\rfloor}$ is an isomorphism for all $s<0$. 

Consider now the mapping cone $\hat {\mathbb X}_{[i],r}$ (with $r=p/q$) of $\hat v+\hat h : \hat{\mathbb A}_i \to \hat{\mathbb B}_i$. The explicit formulae for $\hat v$ and $\hat h$ above show that the map in homology induced by $\hat v+\hat h$ is onto, and so in light of Theorem \ref{MappingConeTheorem} we obtain the isomorphism 
$$\widehat{HF}(S^3_{p/q}(K),[i]) \cong H_*(\hat{\mathbb X}_{[i],r}) \cong \kerr \left(\hat v+\hat h:H_*(\hat{\mathbb A}_i) \to H_*(\hat{\mathbb B}_i)\right).$$
The kernel of $\hat v+\hat h$ is easily computed. Namely, consider the following diagram in which the vertical maps indicate the nonzero $\hat v_k$'s and the slanted maps correspond to the nonzero $\hat h_k$'s:

\centerline{\tiny
\xymatrix{
H_*\left((-2, \hat A_{\left\lfloor\frac{i-2p}{q} \right\rfloor})\right) \ar[dr]_{\cong} & 
H_*\left((-1, \hat A_{\left\lfloor\frac{i-p}{q} \right\rfloor})\right)  \ar[dr]_{\cong} & 
H_*\left((0,\hat A_{\left\lfloor\frac{i}{q} \right\rfloor})\right) \ar[d]^{\cong}& 
H_*\left((1,\hat A_{\left\lfloor\frac{i+p}{q} \right\rfloor})\right)  \ar[d]^{\cong}  \\
 & 
H_*((-1,\hat B)) & 
H_*((0,\hat B)) & 
H_*((1,\hat B))  \\ 
}
} 
\vskip1mm

The direct sum of the groups in the top row (which is infinite in both directions) represents $H_*(\hat{\mathbb{A}}_i)$ while the direct sum of the groups in the bottom row (likewise infinite in both directions) represents $H_*(\hat{\mathbb{B}}_i)$. The kernel of $\hat v+\hat h$ is easily explicitly identified as 
$$\kerr (\hat v+\hat h) = \left\{  \{(s,a_{\left\lfloor\frac{i+ps}{q} \right\rfloor})\}_{s\in \mathbb Z}\, \big| \,  a_{\left\lfloor\frac{i+ps}{q} \right\rfloor}=0  \text{ for $s\ne -1,0$ and } a_{\left\lfloor\frac{i-p}{q} \right\rfloor}+a_{\left\lfloor\frac{i}{q} \right\rfloor}=0\right\}.$$
Clearly $\kerr (\hat v+\hat h)\cong \mathbb Z$ as needed. 

In the case where $p/q<0$, the above argument needs slight modification. Specifically, the homology of $\hat{\mathbb X}_{[i],r}$ is computed as the homology of the mapping cone

\centerline{\tiny
\xymatrix{
& 
 H_*\left((-1, \hat A_{\left\lfloor\frac{i-p}{q} \right\rfloor})\right) \ar[dl]_{\cong}  & 
H_*\left((-1, \hat A_{\left\lfloor\frac{i-p}{q} \right\rfloor})\right)  \ar[dl]_{\cong} & 
H_*\left((0,\hat A_{\left\lfloor\frac{i}{q} \right\rfloor})\right) \ar[d]^{\cong}& 
H_*\left((1,\hat A_{\left\lfloor\frac{i+p}{q} \right\rfloor})\right)  \ar[d]^{\cong}  \\
H_*((-3,\hat B)) & 
H_*((-2,\hat B)) & 
H_*((-1,\hat B)) & 
H_*((0,\hat B)) & 
H_*((1,\hat B))  \\ 
}
} 
\vskip1mm

This time $\hat v+\hat h$ is into, rather than being onto, and an application of Theorem  \ref{MappingConeTheorem} shows that $H_*(\hat{\mathbb X}_{[i],r})\cong \text{Coker}(\hat v +\hat h) \cong H_*((-1,\hat B))\cong \mathbb Z$. 
\end{proof}
\subsection{Proof of Proposition \ref{PropositionAboutTau}}
Let $K$ be a knot of genus $g>1$ and assume firstly that $\tau(K) = g$. Let $p,q>0$ be relatively prime integers with  $p>(2g-1)q$. This inequality assures that all spin$^c$-structures $[i]$ satisfy one of two properties: 
\begin{itemize}
\item Either $\lfloor \frac{i+ps}{q}\rfloor \le -g $ or $\lfloor \frac{i+ps}{q}\rfloor \ge g$ for all $s\in \mathbb Z$. 
\item There exists exactly one $s_i\in \mathbb Z$ such that $-g<\lfloor \frac{i+ps_i}{q}\rfloor <g$. 
\end{itemize}
Spin$^c$-structures of the first kind are $L$-structures and there are exactly $p-(2g-1)q$ of them (Lemmas \ref{LemmaAux1} and  \ref{LemmaAux2}). Turning to spin$^c$-structures $[i]$ of the second kind, we note that the assumption of $\tau(K)  = g$ implies the vanishing of certain of the maps $\hat v_s$ and $\hat h_s$. Namely, consider the factorization $\hat v_s = \iota_s\circ \pi_s$  
$$\hat A_s \stackrel{\pi_s}{\longrightarrow} \mathcal F_K^{-1}(\langle -\infty, s]) \stackrel{\iota_s}{\longrightarrow} \hat B = \widehat {CF}(S^3)$$ 
with $\pi_s$ being the projection and $\iota_s$ the inclusion map. Since $\tau(K)=g$, it follow that $\hat v_s=0$ for all $s<g$ (since $\iota_s=0$ for $s<g$) and similarly that $\hat h_s =0$ for all $s>-g$. Of course, $\hat v_s$ is an isomorphism for all $s\ge g$ and $\hat h_s$ is an isomorphism for all $s\le -g$. Accordingly, the homology of $\hat{\mathbb X}_{[i],r}$ (with $r=-p/q$) is the homology of the mapping cone

\centerline{
 \xymatrix{
& 
\mathbb Z  \ar[dl] & 
\mathbb Z  \ar[dl] & 
H_*(\hat A_{s_i}) & 
\mathbb Z  \ar[d] &
\mathbb Z  \ar[d]   \\
\mathbb Z  & 
\mathbb Z  & 
\mathbb Z & 
\mathbb Z & 
\mathbb Z & 
\mathbb Z  \\ 
}
}
\vskip1mm
\noindent Theorem \ref{MappingConeTheorem} implies then that $\mathbb Z^2$ injects into $H_*(\hat{\mathbb X}_{[i],r})$ and so $[i]$ is not an $L$-structure. 

The case of $\tau(K) =-g$ and $r=p/q$ follows by symmetry since $\widehat{HF}(S^3_{p/q}(K)) \cong \widehat{HF}(S^3_{-p/q}(\bar K))$ and $\tau(\bar K)=-\tau(K)$. 
\section{Examples} \label{ExamplesSection}
In this section we provide computations supporting our claims in Examples \ref{ExampleDiscrepancy} -- \ref{ExampleOfNonLMinimizingKnots} from the introduction. The main tools are Theorems \ref{MappingConeTheorem} and \ref{RationalSurgeryTheorem} which are used to compute the Heegaard Floer homology of a rational surgery $S^3_{p/q}(K)$ on a knot $K$, with the homologies $H_*(\hat A_s)$ and the maps $\hat v_s, \hat h_s :  H_*(\hat A_s) \to H_*(\hat B)$ as input. The latter groups and maps are computed with the help of the spectral sequences (i)--(iii) from Section \ref{SectionKnotFloerHomologyGroups}. 

Example \ref{ExampleSharpness} follows directly from Proposition \ref{PropositionAboutTau} and the fact that $\tau(T_{(2,2g+1)})=g$.
\subsection{Computations for Example \ref{ExampleDiscrepancy}} For a positive integer $n$, let $Y_n$ be the result of $-\frac{4n+1}{n}$-framed surgery on the Figure Eight knot. The spin$^c$-structures on $Y_n$ can be grouped into two disjoint categories:
\begin{itemize}
\item Spin$^c$-structures $[i]\in \mathbb Z/(4n+1)\mathbb Z$ for which either $\lfloor \frac{i+(4n+1)s}{n}\rfloor \le -1$ or $\lfloor \frac{i+(4n+1)s}{n}\rfloor \ge 1$ for all $s\in \mathbb Z$. 
\item Spin$^c$-structures $[i]\in \mathbb Z/(4n+1)\mathbb Z$ for which there exists a unique $s_i\in \mathbb Z$ such that  $\lfloor \frac{i+(4n+1)s_i}{n}\rfloor =0$.
\end{itemize}
According to Lemma \ref{LemmaAux1} there are $3n+1$ spin$^c$-structures of the first kind, and each of them is an $L$-structure. To show that $\ell (Y_n) = 3n+1$, we need to demonstrate that none of the spin$^c$-structures of the second kind is in an $L$-structure. This becomes an explicit calculation, after determining the differentials in the spectral sequence (i) from Section \ref{SectionKnotFloerHomologyGroups}. Since the Figure Eight knot  $4_1$ is alternating, with Alexander polynomial $t-3+t^{-1}$, and with vanishing signature, its knot Floer homology is given by 
$$\widehat{HFK}(4_1,j) \cong \left\{
\begin{array}{ll}
\mathbb Z_{(1)} & \quad ; \quad j= 1,\cr
\mathbb Z^3_{(0)} & \quad ; \quad j= 0,\phantom{\int_a^b}\cr
\mathbb Z_{(-1)} & \quad ; \quad j= -1.
\end{array}
\right. 
$$
There is a pair of non vanishing vertical differentials $d_2:\mathbb Z_{(1)}\to \mathbb Z^3_{(0)}$ and $d_2:\mathbb Z^3_{(0)}\to \mathbb Z_{(-1)}$ given by the inclusion into the first coordinate, and projection onto the third coordinate, respectively.  Their horizontal counterparts looks as in Figure \ref{pic5} and there are no other differentials. 
\begin{figure}[htb!] 
\centering
\includegraphics[width=5cm]{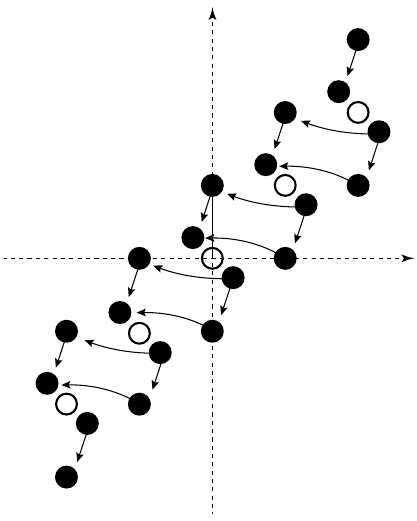}
\caption{The differentials in the spectral sequence $E^2=\widehat{HF}(4_1)\otimes _\mathbb Z \mathbb Z[U,U^{-1}]$ converging to $E^\infty = HF^\infty (S^3)$. The white dot represents the summand $\mathbb Z_{(0)}$ (and its various translates by $U^n$) containing the generator of $\widehat{HF}(S^3)$, the two blacks dots right next to it represent the other two copies of $\mathbb Z_{(0)}$ in $\widehat{HFK}(4_1,0)$ (and their various translates by $U^n$). }  \label{pic5}
\end{figure}

From this, it is now an easy matter to find that 
$$
H_*(\hat A_s) \cong \left\{
\begin{array}{cl}
\mathbb Z  & ; s\ne 0, \cr
\mathbb Z\oplus \mathbb Z^2 & ; s=0, 
\end{array}
\right.
\quad \quad 
\hat v_s = \left\{
\begin{array}{cl}
\text{id} & ; s>0,\cr
\pi_1 & ; s=0,\cr
0 & ; s<0,
\end{array}
\right.
\quad \quad 
\hat h_s = \left\{
\begin{array}{cl}
0 & ; s> 0,\cr
\pi_1& ; s=0, \cr
\text{id} & ; s<0.
\end{array}
\right.
$$
In the above, $\pi_1:\mathbb Z\oplus \mathbb Z^2\to \mathbb Z$ is the projection on the first summand. These computations show that $\widehat{HF}(Y,[i])\cong \mathbb Z^3$ for all spin$^c$-structures $[i]$ of the second kind,  as this Heegaard Floer group is the homology of the mapping cone below (with arrows without labels corresponding to  isomorphisms).

\centerline{\tiny
\xymatrix{
& 
\mathbb Z  \ar[dl] & 
\mathbb Z  \ar[dl] & 
\mathbb Z\oplus \mathbb Z^2 \ar[dl]_{\pi_1} \ar[d]^{\pi_1}& 
\mathbb Z  \ar[d] &
\mathbb Z  \ar[d]   \\
\mathbb Z  & 
\mathbb Z  & 
\mathbb Z & 
\mathbb Z & 
\mathbb Z & 
\mathbb Z  \\ 
}
}
\vskip1mm
\noindent It follows that $\ell (Y_n) = 3n+1$ as claimed, demonstrating the sharpness of inequalities \eqref{MainInequality} and \eqref{MainInequalityCorollary} from Theorem \ref{main} and Corollary \ref{main2} (in the case of $q=1$) respectively. In particular, $\gq(Y_n) =1$ and $\gz(Y_n)\ge \frac{n+1}{2}$ and hence $\gz(Y_n) - \gq(Y_n)\ge \frac{n-1}{2}$. 

To show that $\gz(Y_n)$ is finite, we show that $Y_n$ also arises as an integral surgery on a knot. We will do so by applying a set of Rolfsen twists \cite{Rolfsen2, Rolfsen1} to the framed knot $(4_1, -\frac{4n+1}{n})$ through which $Y_n$ was defined. 

Consider first the framed link from Figure \ref{pic1}(a). After applying a Rolfsen twist to its component with framing $1$ (and after discarding the resulting $\infty$-framed unknot) we arrive at the framed knot in Figure \ref{pic1}(b), yielding $(4_1,-\frac{4n+1}{n})$ after a simple isotopy. Thus $Y_n$ is also the result of Dehn surgery on the framed link in Figure \ref{pic1}(a). 
\begin{figure}[htb!] 
\centering
\includegraphics[width=15cm]{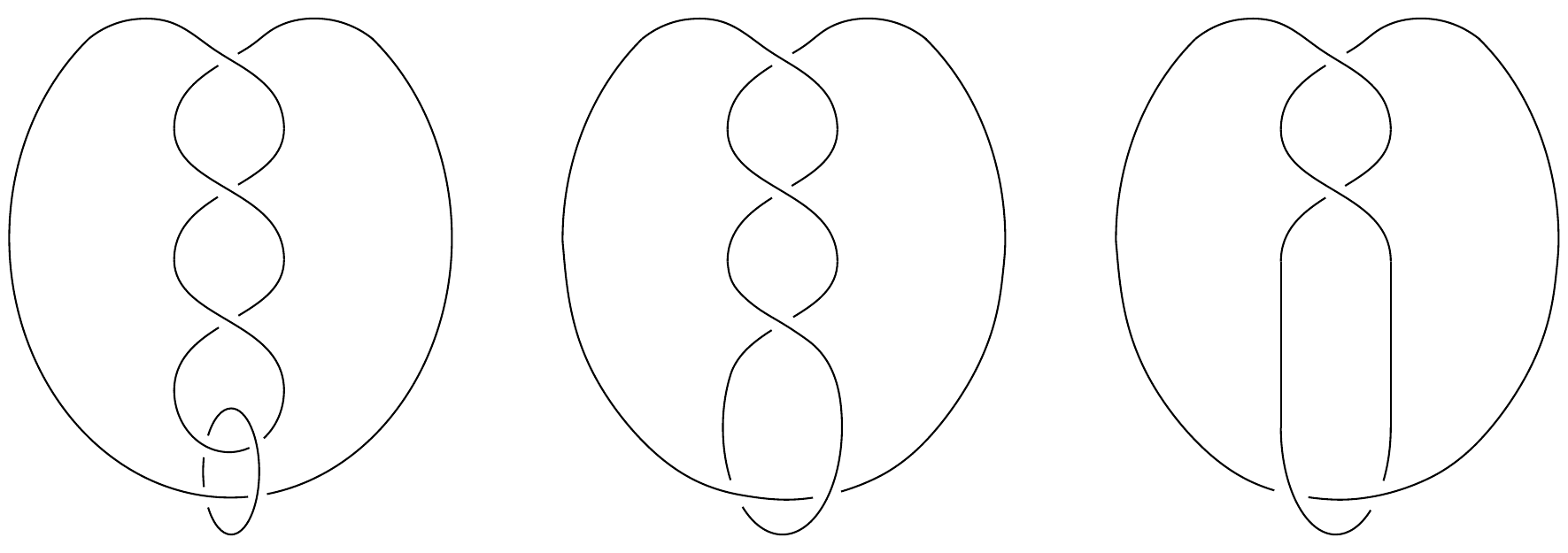}
\put(-360,-5){$1$}
\put(-420,20){$-\frac{1}{n}$}
\put(-285,20){$-\frac{4n+1}{n}$}
\put(-133,20){$-\frac{4n+1}{n}$}
\put(-370,-30){(a)}
\put(-220,-30){(b)}
\put(-70,-30){(c)}
\caption{A Rolfsen twist on the $1$-framed unknot in Figure (a) yields the framed knot in Figure (b). Figure (c) results from the latter by a simple isotopy. }  \label{pic1}
\end{figure}

Applying an isotopy to the framed link from Figure \ref{pic1}(a) gives the framed link in Figure \ref{pic2}(a). The latter, after performing a Rolfsen twist on the  $-\frac{1}{n}$-framed component (and again discarding the resulting $\infty$-framed unknot) leads to the framed knot in Figure \ref{pic2}(b). The framing of the latter is an integer, showing that $\gz(Y_n)$ is finite. 
\begin{figure}[htb!] 
\centering
\includegraphics[width=14cm]{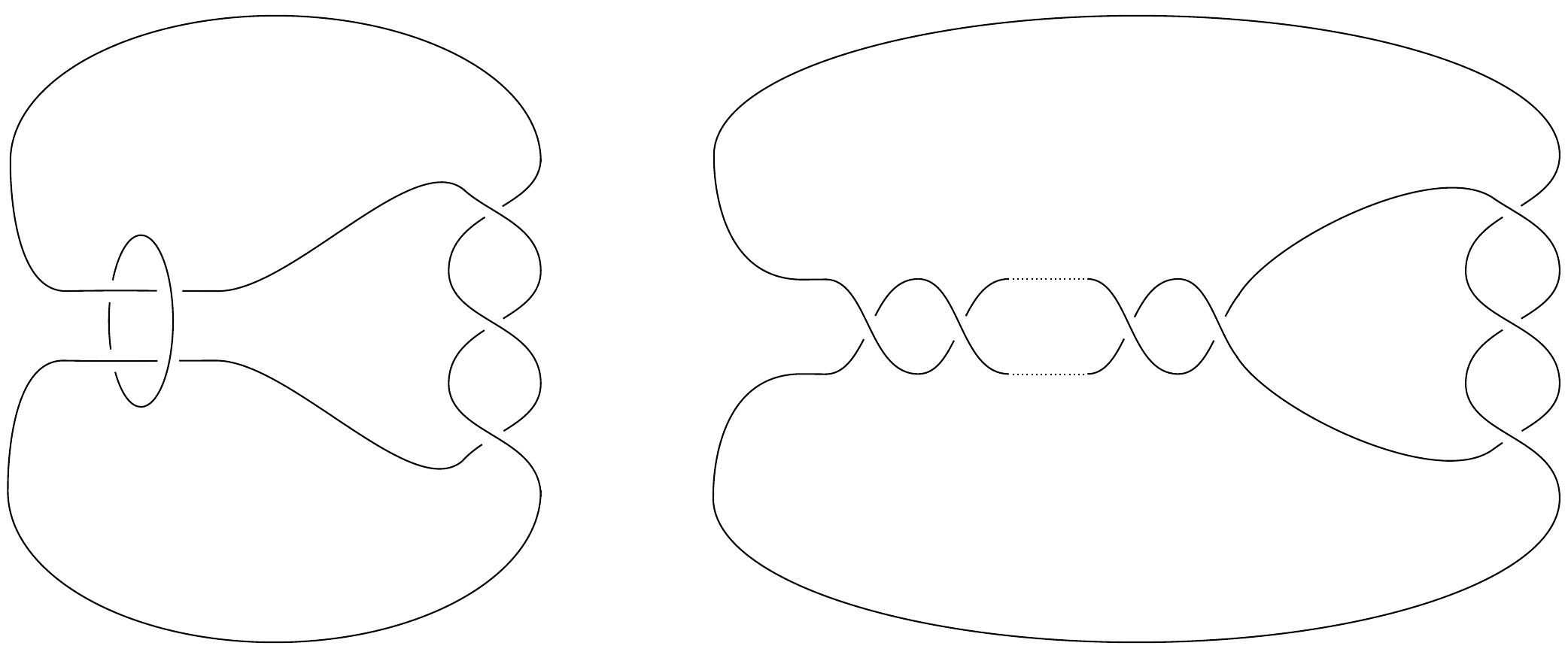}
\put(-340,-30){(a)}
\put(-115,-30){(b)}
\put(-370,52){$-\frac{1}{n}$}
\put(-405,65){$1$}
\put(-35,158){$4n+1$}
\put(-177,68){$\underbrace{\phantom{iiiiiiiiiiiiiiiiiiiiii}} $}
\put(-160,50){\tiny $2n$ right-handed}
\put(-150,40){\tiny half twists.}
\caption{Applying a Rolfsen twist to the $-\frac{1}{n}$-framed component of the link in Figure (a), yields the framed knot in Figure (b).}  \label{pic2}
\end{figure}
%
%
%
\subsection{Computations for Example \ref{ExampleOfNonLMinimizingKnots}} 
Let $K_{2m,2k+1}$ be the knot as defined by Figure \ref{pic3}, with $m,k\in \mathbb N$. It is easy to establish that $K_{2m,2k+1}$ is an alternating knot, with signature $2m$, and with Alexander polynomial 
$$\Delta_{2m,2k+1}(t) = (k+1)(t^m+t^{-m}) -(2k+1) \sum _{i=0}^{2m-2} (-1)^i t^{m-1-i},$$
from which $g(K_{2m,2k+1})=m$ follows. Together, these determine the knot Floer homology of $K_{2m,2k+1}$: 
$$\widehat{HFK}(K_{2m,2k+1},j) \cong \left\{
\begin{array}{cl}
\phantom{\mathbb Z^k_{(j+m)}\oplus}\mathbb Z_{(j+m)}\oplus \mathbb Z^{k}_{(j+m)} & \quad ; \quad j=m, \cr & \cr 
 \mathbb Z^{k}_{(j+m)}\oplus \mathbb Z_{(j+m)}\oplus \mathbb Z^{k}_{(j+m)} & \quad ; \quad |j|<m, \cr & \cr 
 \mathbb Z^k_{(j+m)}\oplus \mathbb Z_{(j+m)}\phantom{\oplus \mathbb Z^k_{(j+m)}} & \quad ; \quad j=-m
\end{array}
\right.$$
The vertical components of the $d_2$ differentials are then:

\centerline{
\xymatrix{
&  \mathbb Z_{(2m)} \ar[d]^\cong & \mathbb Z_{(2m)}^k \ar[d]^\cong\\
\mathbb Z_{(2m-1)}^k  \ar[d]^\cong&  \mathbb Z_{(2m-1)}  & \mathbb Z_{(2m-1)}^k \\
\mathbb Z_{(2m-2)}^k &  \mathbb Z_{(2m-2)}  \ar[d]^\cong & \mathbb Z_{(2m-2)}^k  \ar[d]^\cong \\
\mathbb Z_{(2m-3)}^k \ar@{.}[d]&  \mathbb Z_{(2m-3)}  \ar@{.}[d] & \mathbb Z_{(2m-3)}^k \ar@{.}[d] \\
\mathbb Z_{(3)}^k  \ar[d]^\cong &  \mathbb Z_{(3)}  & \mathbb Z_{(3)}^k \\
\mathbb Z_{(2)}^k &  \mathbb Z_{(2)} \ar[d]^\cong & \mathbb Z_{(2)}^k \ar[d]^\cong \\
\mathbb Z_{(1)}^k \ar[d]^\cong&  \mathbb Z_{(1)}  & \mathbb Z_{(1)}^k \\
\mathbb Z_{(0)}^k &  \mathbb Z_{(0)}  & 
}
}
\vskip1mm
The placement of the horizontal components of the $d_2$ differential in relation to the vertical ones, is a slightly more delicate task. Nevertheless, this placement is uniquely determined by the underlying algebra. This is evident for the horizontal differential acting on $\widehat{HFK}(K_{2m,2k+1},-m)\otimes U^t$, $t\in \mathbb Z$, and it is as in the large diagram on the next page. Once this differential is understood, it pins down uniquely the differential on $\widehat{HFK}(K_{2m,2k+1},-m+1)\otimes U^t$. Proceeding by induction, one obtained all the horizontal $d_2$ differentials. This procedure leads to:

\centerline{\tiny
\xymatrix{
& & & & &  \mathbb Z_{(2m+2)} \ar[d]^\cong & \mathbb Z_{(2m+2)}^k \ar[d]^\cong \\
&  \mathbb Z_{(2m)} \ar[d]_\cong & \mathbb Z_{(2m)}^k \ar[d]_\cong & & \mathbb Z_{(2m+1)}^k \ar@/_/[ll] \ar[d]^\cong&  \mathbb Z_{(2m+1)}  & \mathbb Z_{(2m+1)}^k\\
\mathbb Z_{(2m-1)}^k  \ar[d]_\cong&  \mathbb Z_{(2m-1)}  & \mathbb Z_{(2m-1)}^k & & \mathbb Z_{(2m)}^k \ar@/_/[ll] &  \mathbb Z_{(2m)}  \ar[d]^\cong  \ar@/_2pc/[llll] & \mathbb Z_{(2m)}^k  \ar[d]^\cong \\
\mathbb Z_{(2m-2)}^k &  \mathbb Z_{(2m-2)}  \ar[d]_\cong & \mathbb Z_{(2m-2)}^k  \ar[d]_\cong &  & \mathbb Z_{(2m-1)}^k  \ar@/_/[ll]  \ar[d]^\cong &  \mathbb Z_{(2m-1)}  & \mathbb Z_{(2m-1)}^k & \\
\mathbb Z_{(2m-3)}^k \ar[d]_\cong&  \mathbb Z_{(2m-3)}  & \mathbb Z_{(2m-3)}^k  & & \mathbb Z_{(2m-2)}^k   \ar@/_/[ll] \ar@{.}[d] &  \mathbb Z_{(2m-2)}  \ar@/_2pc/[llll]  \ar@{.}[d] & \mathbb Z_{(2m-2)}^k  \ar@{.}[d]\\
\mathbb Z_{(2m-4)}^k \ar@{.}[d]&  \mathbb Z_{(2m-4)}  \ar@{.}[d] & \mathbb Z_{(2m-4)}^k \ar@{.}[d]  & \vdots & \mathbb Z_{(6)}^k   &  \mathbb Z_{(6)} \ar[d]^\cong & \mathbb Z_{(6)}^k \ar[d]^\cong\\
\mathbb Z_{(4)}^k  &  \mathbb Z_{(4)}   \ar[d]_\cong & \mathbb Z_{(4)}^k \ar[d]_\cong & & \mathbb Z_{(5)}^k  \ar@/_/[ll] \ar[d]^\cong &  \mathbb Z_{(5)} & \mathbb Z_{(5)}^k  \\
\mathbb Z_{(3)}^k  \ar[d]_\cong &  \mathbb Z_{(3)}  & \mathbb Z_{(3)}^k & & \mathbb Z_{(4)}^k \ar@/_/[ll]&  \mathbb Z_{(4)} \ar@/_2pc/[llll]\ar[d]^\cong & \mathbb Z_{(4)}^k \ar[d]^\cong \\
\mathbb Z_{(2)}^k &  \mathbb Z_{(2)} \ar[d]_\cong & \mathbb Z_{(2)}^k \ar[d]_\cong & & \mathbb Z_{(3)}^k \ar@/_/[ll] \ar[d]^\cong&  \mathbb Z_{(3)}  & \mathbb Z_{(3)}^k\\
\mathbb Z_{(1)}^k \ar[d]_\cong&  \mathbb Z_{(1)}  & \mathbb Z_{(1)}^k  & \hspace{1cm}  &\mathbb Z_{(2)}^k \ar@/_/[ll]&  \mathbb Z_{(2)} \ar@/_2pc/[llll] &   \\
\mathbb Z_{(0)}^k &  \mathbb Z_{(0)} &   & & & & 
}
}
\vskip1mm
The homology of the various $\hat A_s$, is now easy to determine. Indeed, the only relevant part of $\hat A_s$ that needs examining, is the contribution from generators $[x,i,j]$ with $(i,j)\in \{ (0,s), (-1,s), (0,s-1)\}$. The homology depends on the parity of $s$. Firstly, if $|s|<m$ and $m-s$ is even, the relevant part looks like:
\vskip8mm
\centerline{\tiny
\xymatrix{
 \mathbb Z_{(2m-s-1)}^k &  \mathbb Z_{(2m-s-1)}  & \mathbb Z_{(2m-s-1)}^k& &  \mathbb Z_{(2m-s)}^k \ar@/_/[ll] &  \mathbb Z_{(2m-s)} \ar@/_2pc/[llll]  \ar[d]^\cong& \mathbb Z_{(2m-s)}^k \ar[d]^\cong  \\
&&& &  \mathbb Z_{(2m-s-1)}^k  \ar[d]^\cong &  \mathbb Z_{(2m-s-1)}  & \mathbb Z_{(2m-s-1)}^k  \\
&&&   & \mathbb Z_{(2m-3)}^k&  \mathbb Z_{(2m-3)}  & \mathbb Z_{(2m-3)}^k
}
}
\vskip1mm
Thus, $H_*(\hat A_s) \cong \mathbb Z\oplus \mathbb Z \oplus \mathbb Z$ and $\hat v_s$ and $\hat h_s$ are the projections $\pi_1$ and $\pi_2$ onto the first and second $\mathbb Z$-summand respectively. 

If $|s|<m$ and $m-s$ is odd, we get instead 
\vskip8mm
\centerline{\tiny
\xymatrix{
 \mathbb Z_{(2m-s-1)}^k &  \mathbb Z_{(2m-s-1)}  & \mathbb Z_{(2m-s-1)}^k& &  \mathbb Z_{(2m-s)}^k \ar@/_/[ll] \ar[d]^\cong &  \mathbb Z_{(2m-s)} & \mathbb Z_{(2m-s)}^k   \\
&&& &  \mathbb Z_{(2m-s-1)}^k  &  \mathbb Z_{(2m-s-1)}  \ar[d]^\cong  & \mathbb Z_{(2m-s-1)}^k  \ar[d]^\cong  \\
&&&   & \mathbb Z_{(2m-3)}^k&  \mathbb Z_{(2m-3)}  & \mathbb Z_{(2m-3)}^k
}
}
\vskip1mm
The homology of $\hat A_s$ is then isomorphic to $\mathbb Z \oplus \mathbb Z \oplus \mathbb Z^{2k+1}$ and $\hat v_s$ and $\hat h_s$ are again the projections $\pi_1$ and $\pi_2$ onto the first and second $\mathbb Z$-summand respectively.

Assuming that $p-(2m-1)q>0$, we proceed to compute the number of $L$-structures of $S^3_{-p/q}(K_{2m,2k+1})$ by dividing its spin$^c$-structures into two distinct groups:
\begin{itemize}
\item Those $[i]\in Spin^c(S^3_{-p/q}(K_{2m,2k+1}))$ for which $\lfloor \frac{i+ps}{q}\rfloor\le -m$ or $\lfloor \frac{i+ps}{q}\rfloor\ge m$ for all $s\in \mathbb Z$. 
\item  Those $[i]\in Spin^c(S^3_{-p/q}(K_{2m,2k+1}))$ for which there exists a unique $s_i\in \mathbb Z$ with $-g<\lfloor \frac{i+ps_i}{q}\rfloor<g$. 
\end{itemize}
Spin$^c$-structures of the first kind are guaranteed to be $L$-structures and there are $p-(2m-1)q$ of them. For the spin$^c$-structures of the second kind, we need to distinguish between those $s'_i=\lfloor \frac{i+ps_i}{q}\rfloor$ with $m-s_i'$ even and odd. 

If $m-s_i'$ is even, they our computation of $H_*(\hat A_{s'_i})$, $\hat v_{s_i'}$ and $\hat h_{s_i'}$ above  show that $\widehat{HF}(S^3_{-p/q}(K_{2m,2k+1}),[i])$ is isomorphic to the homology of the mapping cone:

\centerline{\tiny
\xymatrix{
& 
\mathbb Z  \ar[dl] & 
\mathbb Z  \ar[dl] & 
\mathbb Z\oplus \mathbb Z\oplus \mathbb Z \ar[dl]_{\pi_2} \ar[d]^{\pi_1}& 
\mathbb Z  \ar[d] &
\mathbb Z  \ar[d]   \\
\mathbb Z  & 
\mathbb Z  & 
\mathbb Z & 
\mathbb Z & 
\mathbb Z & 
\mathbb Z  \\ 
}
}
\vskip1mm
\noindent Its homology is easily found to be $\mathbb Z$ and so such spin$^c$-structures are $L$-structures. 

If $m-s_i'$ is odd, our computations of $H_*(\hat A_{s_i'})$, $\hat v_{s_i'}$ and $\hat h_{s_i'}$   show that $\widehat{HF}(S^3_{-p/q}(K_{2m,2k+1}),[i])$ is isomorphic to the homology of the mapping cone

\centerline{\tiny
\xymatrix{
& 
\mathbb Z  \ar[dl] & 
\mathbb Z  \ar[dl] & 
\mathbb Z\oplus \mathbb Z\oplus \mathbb Z^{2k+1} \ar[dl]_{\pi_2} \ar[d]^{\pi_1}& 
\mathbb Z  \ar[d] &
\mathbb Z  \ar[d]   \\
\mathbb Z  & 
\mathbb Z  & 
\mathbb Z & 
\mathbb Z & 
\mathbb Z & 
\mathbb Z  \\ 
}
}
\vskip1mm

which is isomorphic to $\mathbb Z^{2k+1}$. Thus, this $[i]$ is not an $L$-structure for any choice of $k\in \mathbb N$. 

To summarize, the number of $L$-structures coming from spin$^c$-structures of the second kind is $(m-1)q$, which when added to the number $p-(2m-1)q$ of $L$-structures from spin$^c$-structures of the first kind, gives a total of $p-mq$ $L$-structures on $S^3_{-p/q}(K_{2m,2k+1})$, as claimed in Example \ref{ExampleOfNonLMinimizingKnots}.

To complete proving the claims made in Example \ref{ExampleOfNonLMinimizingKnots}, we need to demonstrate that the framed knots $(K_{2m,2k+1},-\frac{4mn-1}{n})$ and $(K_{2n,2k+1},-\frac{4mn-1}{m})$ are surgery equivalent. This is accomplished by a sequence of Rolfsen twists as explained in Figure \ref{pic5}. 
\begin{figure}[htb!] 
\centering
\includegraphics[width=15cm]{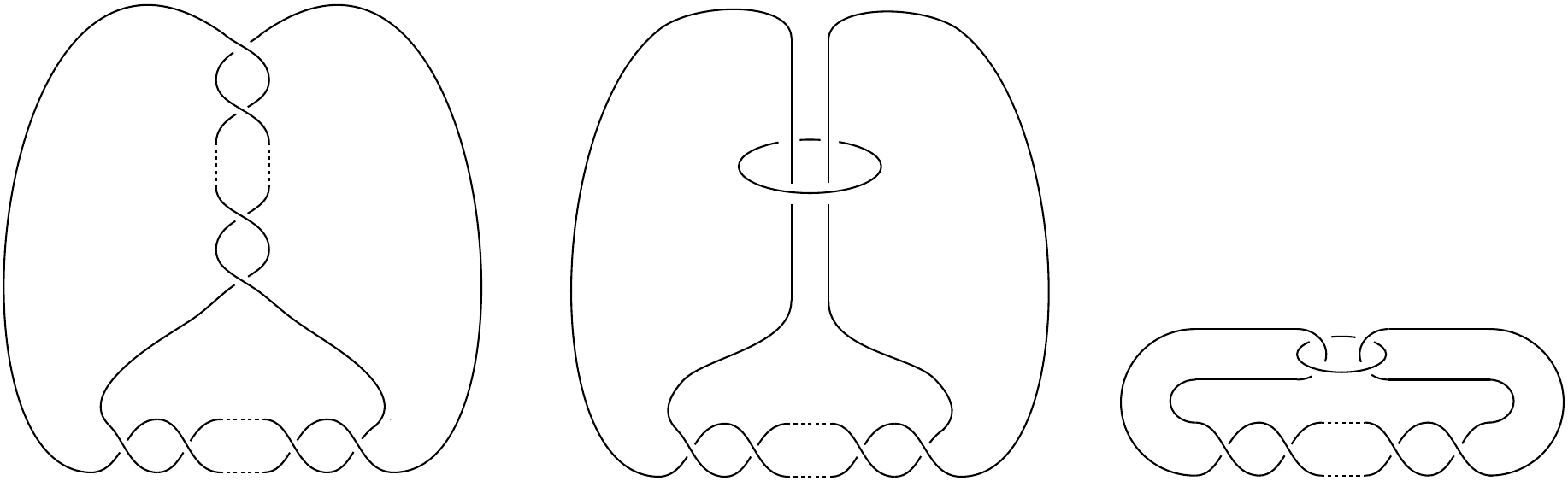}
\put(-393,-2){$\underbrace{\phantom{iiiiiiiiiiiiiiii}}$}
\put(-240,-2){$\underbrace{\phantom{iiiiiiiiiiiiiiii}}$}
\put(-95,-2){$\underbrace{\phantom{iiiiiiiiiiiiiiii}}$}
\put(-397,-17){\tiny $2k+1$ right-handed}
\put(-380,-27){\tiny half twists.}
\put(-244,-17){\tiny $2k+1$ right-handed}
\put(-227,-27){\tiny half twists.}
\put(-99,-17){\tiny $2k+1$ right-handed}
\put(-82,-27){\tiny half twists.}
\put(-365,82){$\left. \begin{array}{c} \cr \cr \cr \cr \end{array}\right\}$}
\put(-343,90){\tiny $2m$ left-}
\put(-343,80){\tiny -handed}
\put(-343,70){\tiny half twists.}
\put(-333,133){\tiny $-\frac{4mn-1}{n}$}
\put(-188,80){\tiny $\frac{1}{m}$}
\put(-170,133){\tiny $\frac{1}{n}$}
\put(-65,47){\tiny $\frac{1}{m}$}
\put(-7,40){\tiny $\frac{1}{n}$}
\put(-368,-50){(a)}
\put(-213,-50){(b)}
\put(-68,-50){(c)}
\caption{The knot $K_{2m,2k+1}$ with framing $-\frac{4mn-1}{n}$ in picture (a) is obtained from the framed link in picture (b) by performing a Rolfsen twist along the unknot with framing $1/m$. The framed link in picture (c), gotten by an isotopy from the link in picture (b), is symmetric under interchanging $m$ and $n$. Accordingly, the framed knots $\left(K_{2m,2k+1},-\frac{4mn-1}{n} \right)$ and $\left(K_{2n,2k+1},-\frac{4mn-1}{m} \right)$ are sugery equivalent. }  \label{pic5}
\end{figure}

\end{document}